    \renewcommand{\ll}{\left\langle}
    \newcommand{\rr}{\right\rangle}
    \newcommand{\ls}{\left\{}
    \newcommand{\rs}{\right\}}
    \newcommand{\id}{\operatorname{id}}
    \newcommand{\mcH}{\ensuremath{\mathcal{H}}}
    \newcommand{\mbR}{\ensuremath{\mathbb{R}}}
    \newcommand{\mcP}{\ensuremath{\mathcal{P}}}
    \newcommand{\mcQ}{\ensuremath{\mathcal{Q}}}
    \newcommand{\Sym}[1][n]{\operatorname{Sym}_{ #1 }}
    \newcommand{\CoxComplex}[1][\Sym]{\ensuremath{\Sigma( #1 )}}
    \newcommand{\targetspace}{X}
    \newcommand{\subtargetspace}{Y}
\newcommand\restr[2]{{
  \left.\kern-\nulldelimiterspace 
  #1 
  \vphantom{\big|} 
  \right|_{#2} 
  }}
\numberwithin{thmcounter}{section}
\newtheorem{theorem}{Theorem}[section]
\newtheorem{corollary}[theorem]{Corollary}
\newtheorem{lemma}[theorem]{Lemma}
\newtheorem{proposition}[theorem]{Proposition}
\theoremstyle{definition}
\newtheorem{definition}[theorem]{Definition}
\newtheorem{remark}[theorem]{Remark}
\newtheorem{example}[theorem]{Example}
\title{Stratifying the space of barcodes using Coxeter complexes}
\author{Benjamin Brück, Adélie Garin}
\begin{document}

\maketitle

\begin{abstract}
    We use tools from geometric group theory to produce a stratification of the space $\mathcal{B}_n$ of barcodes with $n$ bars. The top-dimensional strata are indexed by permutations associated to barcodes as defined by Kanari, Garin and Hess. More generally, the strata correspond to marked double cosets of parabolic subgroups of the symmetric group $\Sym$. This subdivides $\mathcal{B}_n$ into regions that consist of barcodes with the same averages and standard deviations of birth and death times and the same permutation type. We obtain coordinates that form a new invariant of barcodes, extending the one of Kanari--Garin--Hess. This description also gives rise to metrics on $\mathcal{B}_n$ that coincide with modified versions of the bottleneck and Wasserstein metrics.
\end{abstract}

\section{Introduction}
Barcodes \cite{topdata,perssurvey,barcodes} are topological summaries of the persistent homology of a filtered space. 
The barcode $B$ associated to a \emph{filtration} $\{X_t\}_{t \in \mbR}$ is a multiset of points $(b,d) \in \mbR^2$. It summarises the creation and destruction of homology classes while varying the parameter $t$, which is often interpreted  as ``time''.
A bar $(b,d)\in B$  corresponds to a homology cycle appearing in $X_{b}$ and becoming a boundary in $X_{d}$.
The first element of the pair $(b,d)$ is called the \emph{birth} and the second one the \emph{death}.

Persistent homology has applications in many fields, from biology \cite{ Byrne2019,Gameiro2015,TMD,Reimann2017} to material science \cite{Robins_materials, Lee2018, robins_percolating_2016}, astronomy \cite{astronomy} and climate science~\cite{Muszynski2019}. In many of these applications, it is necessary to study statistics on barcodes. Unfortunately, the space of barcodes is not a Hilbert space, which means that it can be difficult to apply statistical methods to it. 
Several ways to overcome the issue exist, such as the creation of kernels to map barcodes into a Hilbert space \cite{landscapes, stable_signatures,pers_images, diagram_to_vectors}.

In this paper, we tackle this issue from a different perspective. We use combinatorial tools from geometric group theory to define new coordinates for describing barcodes. These coordinates divide the space of barcodes into regions indexed by the averages and the standard deviations of births and deaths and by the permutation type of a barcode as defined in \cite{TRN,trees_barcodesII}. By associating to a barcode the coordinates of its region, we define a new invariant of barcodes.
This opens the door to doing statistics on barcodes using methods from the field of permutation statistics.

\paragraph{Motivation}

The motivation for this work is to understand the space of barcodes from a combinatorial and geometric point of view. We call a barcode \emph{strict} if there are no two pairs in it that have the same birth or death. It was observed in \cite{TRN} that to a strict barcode $B = \ls (b_i,d_i) \rs_{i \in \ls 1, \ldots, n\rs}$ with $n$ bars, one can associate a permutation $\sigma_B\in \Sym$. It is the permutation such that the bar with the $i$-th smallest death has the $\sigma_B(i)$-th smallest birth. 
This divides the set of strict barcodes with $n$ bars into $n!$ equivalence classes, one for each element of the symmetric group $\Sym$. Based on this observation, one can study the combinatorial properties of strict barcodes by describing these equivalence classes---or equivalently, the elements of $\Sym$---and the relations between them.

A first approach to this, taken in \cite{TRN,trees_barcodesII}, is to consider the Cayley graph of the symmetric group with respect to the generating set given by adjacent transpositions $(i,i+1)$. 
This yields a combinatorial representation of the elements of $\Sym$. It tells us how a pair of permutations can be transformed into one another using transpositions one step at a time.  However, it yields no information about ``higher order relations'' that exist among larger sets of permutations.

\begin{figure}
    \centering
    \includegraphics[scale=0.45]{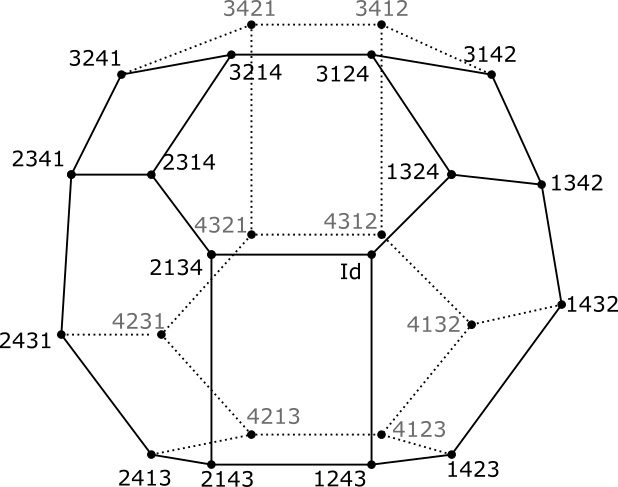}
    \caption{The permutohedron \cite{permutohedron} of order $4$ is a polyhedral decomposition of the sphere where each vertex corresponds to an element of the symmetric group $\Sym[4]$. Its $1$-skeleton is the Cayley graph of $\Sym[4]$ (see also \cref{fig_cayley_s4}).}
    \label{fig_permutohedron}
\end{figure}
A way to resolve this is to add higher dimensional cells to the Cayley graph and to consider it more geometrically as a cell complex instead of as a (combinatorial) graph. 
A first approach would be to use that the Cayley graph of $\Sym$ is the $1$-skeleton of the permutohedron \cite{permutohedron} of order $n$, see \cref{fig_permutohedron}.
This observation embeds the Cayley graph into a polyhedral decomposition of the $(n-2)$-sphere. As this is a more geometric object, it allows to continuously ``walk'' from one permutation to another. 
The problem is that only the vertices (and not the higher dimensional cells) of the permutohedron have an interpretation in terms of elements of the symmetric group.
Furthermore, this representation lacks a notion of ``size'' for barcodes. For instance, the two barcodes depicted in \cref{barcode_size} lie in the same equivalence class, i.e.~have the same associated permutation.
\begin{figure}
    \centering
    \includegraphics[scale=0.5]{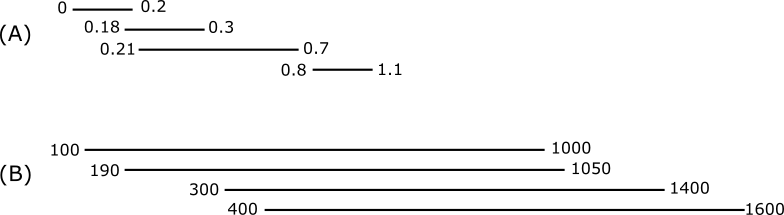}
    \caption{Two barcodes with the same associated permutation (the identity $[1234]$) but with large differences in their birth and death values.}
    \label{barcode_size}
\end{figure}

The alternative that we suggest to overcome these problems is to work with \emph{Coxeter complexes} instead of  permutohedra. The Coxeter complex associated to $\Sym$ is the dual of the permutohedron of order $n$ (see \cref{fig_permutohedron_dual}). It forms a simplicial decomposition of the $(n-2)$-sphere and is well-studied in the context of reflection groups and Tits buildings.
\begin{figure}
    \centering
    \includegraphics[scale=0.4]{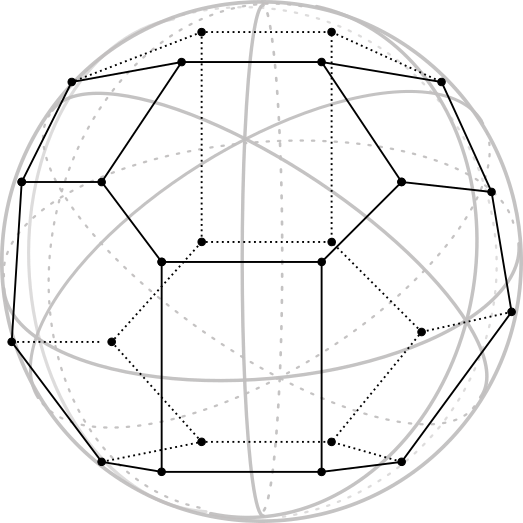}
    \caption{The permutohedron of order $4$ (black) is the dual of the Coxeter complex $\CoxComplex[{\Sym[4]}]$ (grey).}
    \label{fig_permutohedron_dual}
\end{figure}
For us, it has the advantage that its top-dimensional simplices correspond in a natural way to permutations and only passing through a face of lower dimension changes such a permutation. This allows for a better description of continuous changes between different permutations. It also has the advantage that it comes with an embedding in $\mbR^n$, where the additional two real parameters that are needed to describe positions relative to this $(n-2)$-dimensional space have a natural interpretation in terms of the ``size'' of barcodes.
Moreover, using the Coxeter complex description for barcodes allows to define the permutation type of \emph{any} barcode. For non-strict barcodes, it is defined only up to \emph{parabolic subgroups} of $\Sym$, i.e.~subgroups that are generated by sets of adjacent transpositions.

\paragraph{Contributions}

In this paper, we use Coxeter complexes to develop a description of the set $\mathcal{B}_n$ of barcodes with $n$ bars with coordinates that have natural interpretations when doing statistics with barcodes. These coordinates define a stratification of $\mathcal{B}_n$ where the top-dimensional strata are indexed by the symmetric group $\Sym$. Our main contributions can be summarised as follows.

\begin{theorem}
Let $\mathcal{B}_n$ denote the set of barcodes with $n$ bars.
\begin{enumerate}
    \item $\mathcal{B}_n$ can in a natural way be seen as a subset of a quotient $\Sym \backslash \mbR^{2n}$.
    \item $\mathcal{B}_n$ is stratified over the poset of marked double cosets of parabolic subgroups of $\Sym[n]$.
    \item Using this description, one obtains a decomposition of $\mathcal{B}_n$ into different regions. Each region is characterised as the set of all barcodes having the same average birth and death, the same standard deviation of births and deaths and the same permutation type $\sigma_B\in \Sym$.
    \item This description gives rise to metrics on $\mathcal{B}_n$ that coincide with modified versions of the bottleneck and Wasserstein metrics.
\end{enumerate}
\end{theorem}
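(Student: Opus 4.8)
The plan is to treat the four assertions separately: (1) is a matter of fixing the right definitions and coordinates, (2) carries the real content, and (3)--(4) are then read off. \emph{Part (1).} Define a barcode with $n$ bars as a multiset $\ls (b_i,d_i) \rs_{i=1}^n \subseteq \mbR^2$ with $b_i \le d_i$; an enumeration of the bars presents it as a point of $\mbR^{2n}=\mbR^n\times\mbR^n$ (the two factors recording births and deaths), well defined up to the diagonal action of $\Sym$ permuting the $n$ pairs, and the locus $\ls b_i \le d_i\ \forall i\rs$ is $\Sym$-invariant and convex, so $\mathcal{B}_n$ is canonically its image in $\Sym\backslash\mbR^{2n}$. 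I would immediately record the two decompositions that drive the rest: first, $\Sym\backslash\mbR^{2n}$ is the space of $n$-point multisets of $\mbR^2$; second, writing each $\mbR^n$ as $\ll (1,\dots,1)\rr \oplus (1,\dots,1)^\perp$ and then $(1,\dots,1)^\perp \cong \mbR_{\ge 0}\times|\CoxComplex|$ (radius times the unit sphere, which carries the simplicial structure of the Coxeter complex), one gets coordinates (mean, standard deviation, point of $|\CoxComplex|$) on births and on deaths, the means and standard deviations being precisely the $\Sym$-invariant part.

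\emph{Part (2).} Let $\mathcal{A}$ be the $\Sym$-invariant hyperplane arrangement in $\mbR^{2n}$ consisting of all $\ls b_i=b_j\rs$ and all $\ls d_i=d_j\rs$ (the ``diagonal braid arrangement''). Its relatively open faces stratify $\mbR^{2n}$, and a face is a product of a face of the braid arrangement of the birth-factor with one of the death-factor, hence is indexed by a pair of cosets $(wW_J,vW_K)$ of standard parabolic (Young) subgroups $W_J,W_K\le\Sym$, times the contractible mean/radius directions which add nothing. The steps are then: (i) recall that the face poset of a hyperplane arrangement is a stratifying poset, the closure of a face being the union of the faces below it; (ii) pass to the quotient by the finite group $\Sym$ acting diagonally, under which orbits of faces correspond to the pair $(J,K)$ together with the double coset $W_J w^{-1}v W_K$, i.e.\ to a \emph{marked double coset of parabolic subgroups}; (iii) transport the closure order, which reduces to the combinatorial statement that $(w'W_{J'},v'W_{K'})$ lies in the closure of $(wW_J,vW_K)$ exactly when $J\subseteq J'$, $K\subseteq K'$ and the image of $W_J w^{-1}v W_K$ in $W_{J'}\backslash\Sym/W_{K'}$ equals $W_{J'}w^{-1}v W_{K'}$; and (iv) intersect everything with the convex $\Sym$-invariant locus $\ls b_i\le d_i\rs$ and check that this still yields a stratification of $\mathcal{B}_n$ over the resulting down-closed subposet of marked double cosets. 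I expect step (iv), together with getting the ``marking'' right (remembering the parabolic types, not just the double coset as a set), to be where the main difficulty lies.

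\emph{Part (3).} By Part (1) a barcode is determined by $(\mu_b,\sigma_b,\mu_d,\sigma_d)$ together with the class in $\Sym\backslash(|\CoxComplex|\times|\CoxComplex|)$ of the pair (birth point, death point). When $B$ is strict both points lie in top-dimensional simplices (chambers), and since $\Sym$ acts simply transitively on chambers, the diagonal orbit of an ordered pair of chambers is recorded by exactly one group element; comparing with the definition in \cite{TRN} shows this element is the permutation type $\sigma_B$. Hence the set of barcodes with prescribed $(\mu_b,\mu_d,\sigma_b,\sigma_d,\sigma_B)$ is precisely one fibre of this invariant, i.e.\ one region, and nonemptiness and connectedness of a region follow from convexity of the associated cone cut by $\ls b_i\le d_i\rs$. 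The non-strict case is identical with ``chamber/group element'' replaced by ``simplex/marked double coset'', which is exactly how the parabolic subgroups and the ambiguity of the permutation type enter.

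\emph{Part (4).} Equip $\mbR^{2n}$ with the $\ell^\infty$ metric (resp.\ the $\ell^p$ metric) and push it down to the quotient metric $d([x],[y])=\min_{w\in\Sym}\|x-w\cdot y\|$ on $\Sym\backslash\mbR^{2n}$; since $\Sym$ is finite and acts isometrically this is an honest metric, and restricting to $\mathcal{B}_n$ gives a metric there. Unwinding the minimum, an optimal $w$ is exactly a bar-to-bar matching, so $d$ is the bottleneck (resp.\ $p$-Wasserstein) distance between $n$-bar barcodes with diagonal matchings disallowed --- the ``modified'' versions in the statement --- and the remaining task is to match this with the usual definitions, or to adopt the modification as definitional. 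The proof as a whole is an assembly of these pieces, the crux being the identification in Part (2) of the indexing poset and its closure relations, obtained by simultaneously tracking the braid-arrangement face poset in each factor and the effect of the diagonal quotient on relative positions of pairs of faces.
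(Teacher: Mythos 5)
Your proposal is correct and follows essentially the same route as the paper: the same embedding of $\mathcal{B}_n$ into $\Sym\backslash\mbR^{2n}$, the same mean/standard-deviation/spherical coordinates, the identification of diagonal $\Sym$-orbits of pairs of parabolic cosets with marked double cosets, and the same quotient $\ell^\infty$/$\ell^2$ metrics giving the modified bottleneck and Wasserstein distances. Your phrasing of the stratification via the face poset of the braid arrangement in each factor is just the paper's realisation of $\CoxComplex$ by the hyperplanes $x_i=x_j$ (its \cref{lem:sphere_coxcom}), so it is the same decomposition, and the point you flag as delicate (keeping the parabolic types as a marking, not just the double coset) is exactly what the paper handles in \cref{isom_posets}.
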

For more detailed and formal statements of these results, see \cref{prop_bijection_B_n}, \cref{cor_stratification}, \cref{cor_simplified_coordinates} and \cref{prop_isometry_B_n}.

To obtain this description of $\mathcal{B}_n$ we proceed as follows. A barcode is an (unordered) multiset of $n$ pairs of real numbers (births and deaths). It can hence be seen as a point in the quotient space $\Sym \backslash (\mbR^n \times \mbR^n$), where the action of $\Sym$ permutes the coordinate pairs. Since the birth is smaller than the death for every barcode, $\mathcal{B}_n$ is a proper subset of this quotient of $\mbR^{2n}$.

The Coxeter complex $\CoxComplex$ associated to $\Sym$ is a simplicial complex whose geometric realisation is homeomorphic to an $(n-2)$-sphere. Hence, we can decompose $\mbR^n$ as 
\begin{equation*}
    \mbR^n \cong \operatorname{cone}(\CoxComplex) \times \mbR,
\end{equation*}
where $\operatorname{cone}(\CoxComplex) = \big(\CoxComplex\times [0,\infty) \big)/ (x,0)\sim (y,0) \cong \mbR^{n-1}$. This decomposition allows one to describe each point $x\in \mbR^n$ via coordinates $x_\theta, \bar{x}, \lVert v_x \rVert$, where $x_\theta$ specifies a point on the Coxeter complex, $\lVert v_x \rVert$  is the ``cone parameter'' and $\bar{x}$ parametrises the remaining $\mbR$ (for details, see \cref{prop_CoxCoordsRn}, where the naming becomes clear as well). In summary, this describes $\mathcal{B}_n$ as a subset of
\begin{equation*}
    \mathcal{B}_n \subset \Sym \backslash \big(\operatorname{cone}(\CoxComplex) \times \mbR \times \operatorname{cone}(\CoxComplex) \times \mbR \big).
\end{equation*}
We call the coordinates that we obtain from this description \emph{Coxeter coordinates}.
It turns out that for each barcode, these coordinates are $b_\theta, \bar{b},\, \lVert v_b \rVert$ and $d_\theta, \bar{d},\, \lVert v_d \rVert$, where $\bar{b}$ and $\bar{d}$ are the averages of the births and deaths, $\lVert v_b \rVert$ and $\lVert v_d \rVert$ are their standard deviations and the coordinates $b_\theta$ and $d_\theta$ describe the permutation equivalence class of the barcode of \cite{TRN,trees_barcodesII}. The stratification one obtains is induced by the simplicial structure of $\CoxComplex$.

The advantages of these new coordinates are two-fold: Firstly, using points in Coxeter complexes, one obtains coordinates that uniquely specify barcodes and are yet compatible with the combinatorial structure of $\mathcal{B}_n$ given by permutation equivalence classes.
Secondly, one resolves the earlier-mentioned problem that permutation equivalence classes themselves carry no notion of ``size'': The decomposition of $\mathcal{B}_n$ into regions subdivides these equivalence classes by also taking into account the averages and standard deviations of births and deaths. This makes these regions a finer invariant than the permutation type.
Therefore, they offer a new way to study statistics of barcodes by using both the average and standard deviation of births and deaths, which are commonly used summaries in Topological Data Analysis (TDA), and permutation statistics tools. The latter include the number of descents for instance, or the inversion numbers, which have proven useful for the study of the inverse problem for trees and barcodes \cite{TRN,trees_barcodesII}.

\subsection{Related work}\label{related_work}

This paper is a follow-up of the work started in \cite{TRN, trees_barcodesII} to study the space of barcodes from a combinatorial point of view. It extends the approach of considering permutations to classify barcodes to a finer classification that also takes into account the average and standard deviation of births and deaths.
In \cite{master_thesis}, the author also observes a connection between barcodes and the symmetric group in a different setting, by studying the space of barcode bases using Schubert cells. Similarly, \cite{space_barcodes} also studies the space of barcode bases.

The idea of giving coordinates to the space of barcodes is not new \cite{diagram_to_vectors, trop_coord}. For example, the space of barcodes was given tropical coordinates in \cite{trop_coord}.
In \cite{alg_ring_barcodes}, it is mentioned that the space of barcodes can be identified with the $n$-fold symmetric product of $\mbR^2$, and the authors study the corresponding algebra of polynomials associated to the variety. 

Finally, defining a polyhedral structure on a space to study statistics has been done for spaces of (phylogenetic) trees \cite{BHV, grindstaff}. The connection between phylogenetic trees, merge trees and barcodes is studied in \cite{trees_barcodesII}. The polyhedral structure defined in this paper and in \cite{BHV} seem to be related, but we leave this as future work. 
   
\subsection{Overview}
In \cref{sec_background} we review the necessary background on barcodes and on Coxeter complexes. We use a standard way of realising $\Sym$ as a reflection group to explain what we mean with ``Coxeter coordinates'' on $\mbR^n$ in \cref{sec_cox_coords_Rn}.
We then describe the space $\mathcal B_n$ of barcodes with $n$ bars in terms of $\Sym \backslash \mbR^n \times \mbR^n$ in \cref{sec_alt_description}, before adapting the coordinates of $\mbR^n$ to $\mathcal B_n$ in \cref{sec_cox_coords_birthdeath}.   In \cref{sec_stratification}, we describe the stratification of $\mathcal{B}_n$ induced by these coordinates. \cref{cor_simplified_coordinates} decomposes the space of barcodes into regions indexed by the average and standard deviation of the births and deaths and the permutation associated to a barcode. 
Finally, in \cref{sec_metric_B_n}, we show that $\mathcal{B}_n$ can be given metrics inspired by the bottleneck and Wasserstein distances and that it defines an isometry between a subset of $\Sym \backslash \mbR^n \times \mbR^n$ and $\mathcal{B}_n$.
\section{Background}\label{sec_background}

\subsection{Background on TDA}
We start by reviewing the necessary background on TDA. For the reader who is completely new to this, we refer to the reviews \cite{topdata,perssurvey,barcodes}. Even though this work focuses on the space of barcodes and could be apprehended from a purely combinatorial point of view, we shortly mention where barcodes arise in the field of TDA. This section is not necessary for the understanding of this paper, and we will give the combinatorial definition of barcodes that we use in the next section.

 Barcodes are topological summaries of a \emph{filtered topological space}, i.e.~a sequence of spaces ordered by inclusion. To obtain a barcode from a filtered space, one computes homology at each step and considers the maps induced by the inclusions. The output is called a \emph{persistence module}, and it summarises the evolution of the homology at each step of the filtration.

More precisely, let $\{X_t\}_{ t \in \mbR}$ be a filtered topological space, that is, each $X_t$ is a topological space and $X_t \subseteq X_{t'}$ if $t\leq t'$. The $k$-th persistence module associated to $\{X_t\}_{ t \in \mbR}$ is given by $\mathbb{H}_k(\{X_t \}_{ t \in \mbR})$, where $\mathbb{H}_k$ denotes the $k$-th homology functor (over a field $\Bbbk$). The Crawley--Bovey Theorem \cite{crawley2015decomposition} states that under mild tameness conditions on $\{X_t\}_{ t \in \mbR}$, the associated persistence module can be decomposed as a direct sum of interval modules $\bigoplus _{j\in \mathcal J} \Bbbk_{I_j}^{\oplus n_j}$, where the \emph{interval module} $\Bbbk_{I_j}$ is the free $\Bbbk$-module of rank $1$ on the interval $I_j\subseteq \mbR$, with identity maps internal to $I_j$, and is $0$ elsewhere. This decomposition is unique up to reordering. Each interval represents the lifetime of a cycle in the filtered space. For instance, if a $1$-cycle (a loop) appears in the topological space $X_{b_j}$ for the first time and becomes a boundary (gets ``filled in'') in $X_{d_j}$, then this $1$-cycle will be represented by the interval $I_j=  [b_j,d_j)$.  The \emph{barcode} associated to the persistence module is the multiset
$$B = \{I_j\}_{j \in \mathcal{J}},$$ where each interval $I_j$ appears $n_j$ times. 
Usually, each $I_j$ is a half open interval $I_j=[b_j,d_j)$, where $b_j$ is called the \emph{birth} of the homological feature corresponding to $I_j$ and $d_j$ is called its \emph{death}. If the interval $I_j$ is a half infinite interval, i.e.~it is of the form $[b_i,\infty)$, it is called an \emph{essential class}.

In this paper, we will identify such an interval with the pair $(b_j,d_j)$, since we are mostly interested in the combinatorics of the pairs and not the corresponding persistence module. Moreover, $b_j$ and $d_j$ will always take finite values in $\mbR$.

\subsubsection{The space of barcodes}
We introduce here the main definitions used in this paper. We start by a more combinatorial definition of barcodes that we will use in this article.

\begin{definition}
A \emph{barcode} $\{(b_i,d_i)\}_{i \in J}$ is a multiset of pairs $(b_i,d_i)\in \mbR^2$ such that $ b_i < d_i $ for each $i \in J$ and $|J|<\infty$. Each such pair is called a \emph{bar}; its first coordinate $b_i$ is called the \emph{birth} (time) and the second one $d_i$ is called its \emph{death} (time). A barcode is called \emph{strict} if $b_i \neq b_j$ and $d_i \neq d_j$ for $i \neq j$.
We let $\mathcal{B}_n$ denote the set of barcodes with $n$ bars and $\mathcal{B}_n^{st}$ the set of strict barcodes with $n$ bars. 
\end{definition}

 \begin{remark}
The reader familiar with persistent homology will notice that we suppose that the bars corresponding to essential classes have finite values instead of being half-open intervals. This is usually the case in practical applications, where such essential classes are given finite values for representing them on a computer. We also assume that every barcode consists of only finitely many bars.
\end{remark}

\begin{remark}
The definition of strict barcodes was first introduced in \cite{TRN} to define the bijection between the symmetric group on $n$ elements and some equivalence classes of barcodes that we introduce in the next section.
The setting in this paper is slightly different from \cite{TRN} and \cite{trees_barcodesII}, because all the barcodes considered there are specific to merge trees and arise from their $0$-th persistent homology. This is why the definition of a strict barcode in \cite{TRN} and \cite{trees_barcodesII} assumes the existence of an essential bar $(b_0,d_0)$ that contains all the others. In this paper however, barcodes can come from arbitrary filtrations in arbitrary dimension, and such a bar $(b_0,d_0)$ need not exist. Therefore we slightly adapt the definition of a strict barcode and the relation to the symmetric group in the next sections. 
\end{remark}

In practice, for finite barcodes, the indexing set $J$ is commonly the set $\{1,...,n\}$,  giving the bars in the barcode an arbitrary but fixed ordering. We will also adopt this convention from now on. Note however that reordering the bars might change the indexing, but not the underlying barcode (see \cref{ex_indexing_barcode}). It can sometimes be convenient to assume that the indexing is such that the births are ordered increasingly $b_1<b_2<...<b_n$, but we do not make this assumption in this paper unless specified.
 
We often represent a barcode by the set of intervals $[b_i,d_i] \subset \mbR$ (as in \cref{fig_barcode}). Another common way to represent barcodes is  what is called a \emph{persistence diagram}, where the pairs $(b_i,d_i)$ are represented as points in $\mbR^2$ (as in \cref{fig_bottleneck}). These points lie above the diagonal since $b_i < d_i$ for all $i$.
 
 \begin{example}
 \label{ex_indexing_barcode}
\cref{fig_barcode} shows an example of a strict barcode with two different indexing conventions. 
\begin{figure}[H]
    \centering
    \includegraphics[scale=0.4]{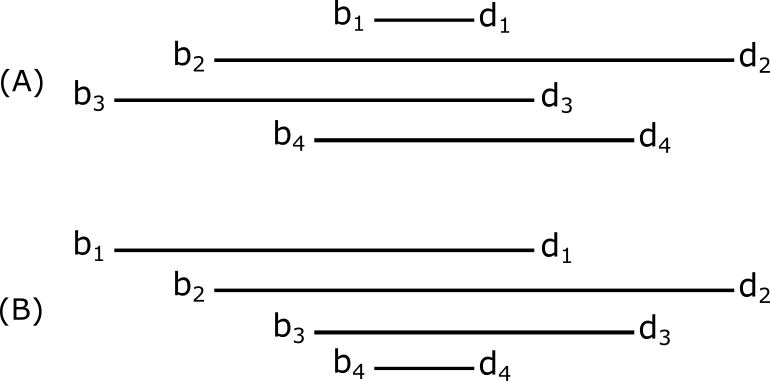}
    \caption{(A) A barcode with $4$ bars. (B) The same barcode with a different indexing  where the bars are ordered by increasing birth times.}
    \label{fig_barcode}
\end{figure}

 \end{example}

To turn the set of barcodes into a topological space, one needs to specify a topology. One option to do this is by introducing the bottleneck or Wasserstein distances, two commonly used metrics for barcodes. 
Intuitively, the bottleneck distance between two barcodes $B$ and $B'$ tries all possible matchings between the bars of $B$ and the bars of $B'$ and chooses the one that minimises the ``energy'' required to move the matched pair of bars with maximal separation. However, it does not only consider matching of bars between $B$ and $B'$ but also with points on the diagonal $\Delta = \{(x,x) \mid x \in \mbR\}$.

\begin{definition}\label{bottleneck}
Let $B= \{(b_i,d_i)\}_{i \in \{1,...,n\}} $ and $B'= \{(b'_i,d'_i)\}_{i \in \{1,...,m\}}$ be two barcodes. The \emph{bottleneck distance} between $B$ and $B'$ is 
$$d_B(B,B') = \min_{\gamma} \max_{x \in B} \lVert x -\gamma(x) \rVert_\infty,$$ where $\gamma$ runs over all possible matchings, i.e.~maps that assign to each bar $(b_i,d_i) \in B$ either a bar in $B'$ or a point in the diagonal $\Delta$, such that no point of $B'$ is in the image more than once. Here, $\lVert \cdot \rVert_\infty$ is the $l^\infty$-norm on $\mbR^2$.
\end{definition}

\begin{remark}
The permutation $\gamma$ acts as a ``reindexing'' of the indices of $B$ and $B'$, and in particular ensures that $d_B(B,B')$ does not depend on any indexing of the bars.
\end{remark}

The \emph{Wasserstein distance} is defined in a similar way by taking the sum over all $l_2$-distances between $x$ and $\gamma(x)$ instead: 
$$d_W(B,B') = \min_{\gamma} \sqrt{ \Sigma_ {x \in B}\lVert x-\gamma(x) \rVert_2^2)}.$$ 

\begin{remark}
Note that in general, the barcodes $B$ and $B'$ need not have the same number of bars. The diagonal allows matchings between barcodes with different number of bars, since ``ummatched'' bars can be sent to the diagonal. In this paper however, we are study the set of barcodes $\mathcal{B}_n$ with exactly $n$ bars (for arbitrary, but fixed $n$) and restrict ourselves to this case.

We are mainly interested in $\mathcal{B}_n$ as a set and the main results we prove do not depend on the metric that is chosen on $\mathcal{B}_n$. We will still with a slight abuse of notation mostly talk of $\mathcal{B}_n$ as a \emph{space}, without specifying a specific metric on it. An exception to that is \cref{sec_metric_B_n}, where we explain how a metric $\tilde{d}_B$ on $\mathcal{B}_n$, which is closely related to the bottleneck distance, occurs in an alternative description of the \emph{set} $\mathcal{B}_n$ that we work with later on. 
\end{remark}

\subsubsection{Relation to the symmetric group}
We write $\Sym$ for the symmetric group on $n$ letters, i.e.~the group of all permutations of $\{1,\ldots,n\}$. 
We usually use the one-line notation for permutations. That is, we specify $\sigma\in \Sym$ by the its image of the ordered set $\{1,\ldots, n\}$, e.g. we write $\sigma=[132] \in \Sym[3]$ if $\sigma(1)=1$, $\sigma(2)=3$ and $\sigma(3)=2$.
We make an exception for transpositions to simplify the notation: the transposition that switches $i$ and $j$ is denoted by $(i,j)$.

\begin{definition}\label{def_permutation} \cite{TRN}
Let $B=\{(b_i,d_i)\}_{i \in \{1,...,n\}}\in \mathcal{B}_n^{st}$ be a strict barcode. If we order the births increasingly such that $b_{i_1} < \ldots< b_{i_n}$, the indexing in $\{1,...,n\}$ gives a permutation $\tau_b$ by $\tau_b(k)=i_k$, i.e.~$\tau_b$ is the (unique) permutation such that
\begin{equation}\label{taub}
    b_{\tau_b(1)}< \ldots< b_{\tau_b(n)}.
\end{equation}
Similarly, ordering the deaths $d_{j_1}<\ldots < d_{j_n}$ gives rise to a permutation $\tau_d$ with $\tau_d(k)=j_k$. The \emph{permutation $\sigma_B$ associated to $B$} is defined as $\sigma_B=\tau_b^{-1}\tau_d$; it tracks the ordering of the death values with respect to the birth values.
\end{definition}

\begin{remark}
The permutations $\tau_b$ and $\tau_d$ both depend on the indexing choice of the $b_i$ and $d_i$. However, the permutation $\sigma$ does not depend on any indexing of the births and deaths, it is intrinsic to the multiset $B$. Indeed, $\sigma_B$ can be defined directly as the permutation that sends the $i$-th death (in increasing order) to the $\sigma(i)$-th birth (idem). 
If we assume that the births are ordered increasingly, then $\tau_b=\id$ and $\sigma_B$ can be defined directly by $\sigma_B=[j_1 j_2 \ldots j_n]$, the indices of the deaths when they are ordered increasingly.

\end{remark}

\begin{example}
\cref{fig_barcode}A shows an example of a strict barcode. Its birth permutation is $\tau_b=[3241]$, since $$  b_3<b_2<b_4<b_1.$$
Similarly, its death permutation is $\tau_d =[1342]$, since $d_1<d_3<d_4<d_2$. The permutation $\sigma_B$ associated to the barcode of \cref{fig_barcode}A is $\sigma_B = [4132] = \tau_b^{-1}\tau_d$.
\cref{fig_barcode}B shows the same barcode with the bars ordered by birth times. The corresponding permutations $\tau_b=[1234]$ and $\tau_d=[4132]$ are different, but the product $\sigma_B = \tau_b^{-1}\tau_d = [4132]$ is the same, as it does not depend on the indexing of the bars. 
Further examples are depicted in \cref{fig_cayley_s4}.
\end{example}

\begin{figure}
    \centering
    \includegraphics[width = .85 \textwidth]{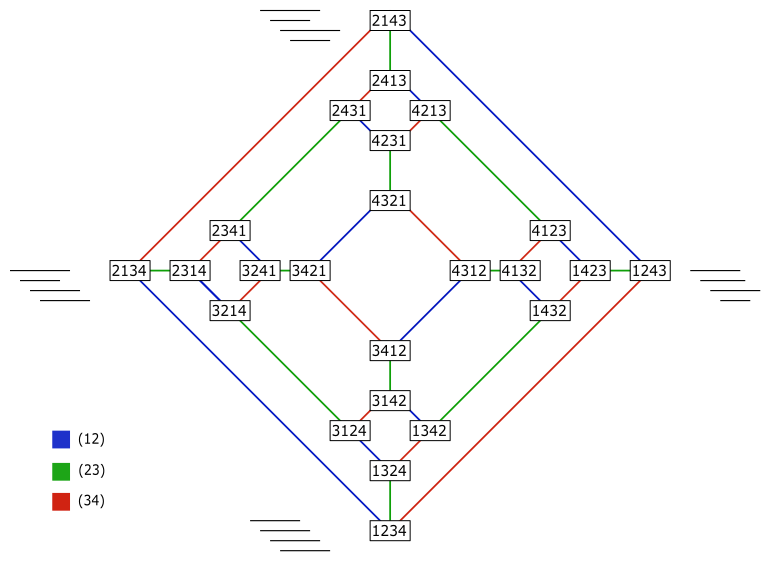}
    \caption{(Figure from \cite{TRN}) The Cayley graph of $\Sym[4]$ generated by the three transpositions $(12),(23),(34)$. Four barcodes are drawn next to the extremities of the graphs (permutations $[1234],[2134],[2143],[1243]$) to illustrate a typical barcode corresponding to each permutation.}
    \label{fig_cayley_s4}
\end{figure}

We extend \cref{def_permutation} to non-strict barcodes in \cref{sec_stratification}.

\subsection{Background on Coxeter groups and complexes}

\label{sec_cox_background}

\paragraph{Coxeter groups}
 \emph{Coxeter groups} form a family of groups that was defined by Tits in its modern form. They are abstract versions of reflection groups; in fact, the family of finite Coxeter groups coincides with the family of finite reflection groups. Besides their close connections to geometry and topology \cite{Dav:geometrytopologyCoxeter}, Coxeter groups have a rich combinatorial theory \cite{BB:CombinatoricsCoxetergroups}. They appear in many areas of mathematics, e.g.~as Weyl groups in Lie theory.
We will view $\Sym$ as one of the most basic examples of a Coxeter group.

Usually, one does not consider a Coxeter group $W$ by itself but instead a \emph{Coxeter system} $(W,S)$, where $S$ is a generating set of $W$ that consists of involutions called the \emph{simple reflections}. In what follows, we will tacitly assume that such a set of simple reflections is always fixed when we talk about a Coxeter group $W$. In the case where $W=\Sym$, we will take $S$ to be the set of adjacent transpositions $S = \ls (i,i+1) \mid 1\leq i \leq n-1 \rs$.
A  rank-$(|S|-1-k)$  (standard) \emph{parabolic subgroup} of $W$ is a subgroup of the form $P_T = \ll T \rr$, where $T\subset S$ is a subset of size $(|S|-1-k)$.

\paragraph{Coxeter complexes}
Each Coxeter group $W$ can be assigned a simplicial complex $\CoxComplex[W]$, the \emph{Coxeter complex}, that is equipped with an action of $W$.
If $W$ is a finite group with set of simple reflections $S$, the complex $\CoxComplex[W]$ is a triangulation of a sphere of dimension $|S|-1$. Coxeter complexes have nice combinatorial properties and are in particular colourable flag complexes \cite[Section 1.6]{AB:Buildings} that are shellable \cite{Bjo:Somecombinatorialalgebraic}.

The top-dimensional simplices of $\CoxComplex[W]$ are in one-to-one correspondence with the elements of the group $W$. Furthermore, one
recovers the Cayley graph of $(W,S)$ as the \emph{chamber graph} of $\CoxComplex[W]$, i.e.~the graph that has a vertex for each top-dimensional simplex of $\CoxComplex[W]$ and an edge connecting two vertices if the corresponding simplices share a codimension-1 face \cite[Corollary 1.75]{AB:Buildings}.

More generally, the set of $k$-simplices in $\CoxComplex[W]$ is in one-to-one correspondence with the cosets of rank-$(|S|-1-k)$ parabolic subgroups of $W$:
\begin{definition} 
\label{def_cox_complex}
The \emph{Coxeter complex} $\CoxComplex[W]$ of the Coxeter system $(W,S)$ is defined as the simplicial complex
\[\CoxComplex[W] = \bigcup_{T \subseteq S}  W / P_T =  \{ \tau P_T  \mid \tau \in W, T \subseteq S\}, \]
where each simplex $\tau P_T$ has dimension\footnote{Note that we take the (combinatorial) convention that this simplicial complex has a unique face of dimension $-1$. This face does not appear in the geometric realisation.} $\dim (\tau P_T)=|S\setminus T|-1$ and the face relation is defined by the partial order 
\begin{equation}
\label{eq_poset_CoxComplex}
    \tau P_T \leq \tau' P_{T'} \Leftrightarrow \tau P_T \supseteq \tau'P_{T'}.
\end{equation} 
The group $W$ acts simplicially on  $\CoxComplex[W]$ by left multiplication on the cosets, $\gamma \cdot (\tau P) \coloneqq \gamma \tau P$.
\end{definition}

\begin{remark}
With a slight abuse of notation, we will in what follows often use the cosets $\tau P$ to also denote simplices in the geometric realisation of the Coxeter complex. To be coherent with the definition of a stratification (\cref{def_strat}), we will always consider these simplices to be closed.
\end{remark}

\paragraph{The Coxeter complex $\boldsymbol{\CoxComplex}$}
\begin{figure}
    \centering
    \includegraphics[width = .8
    \textwidth]{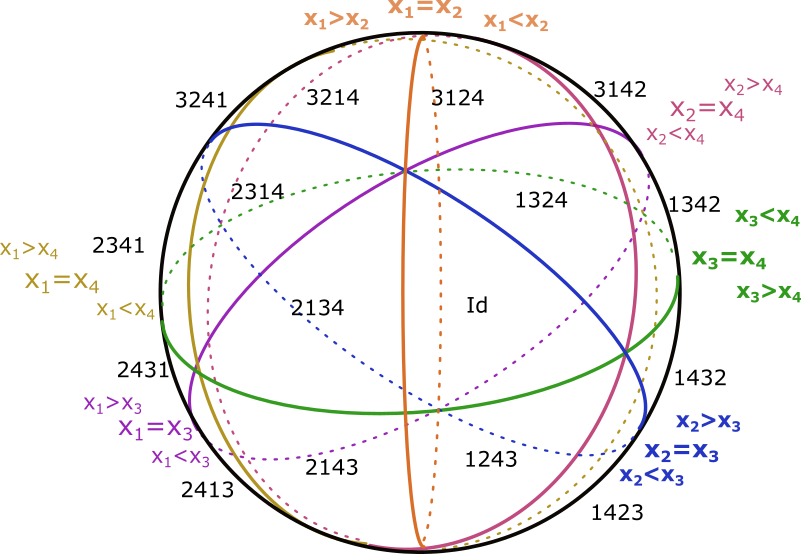}
    \caption{The geometric realisation of the Coxeter complex $\CoxComplex[{\Sym[4]}]$. The permutation corresponding to each triangle of the front of the sphere is indicated in black. The hyperplanes $x_i = x_j$ depicted in colours correspond to the transpositions $(i,j)$. The hyperplanes corresponding to adjacent transpositions $(i,i+1)$ are in boldface. A detailed description of how to obtain such a geometric realisation of the Coxeter complex can be found in \cref{sec_cox_coords_Rn}.}
    \label{coxeter_4}
\end{figure}
For the case $W = \Sym[n]$ that we are interested in, the Coxeter complex $\CoxComplex$ is of dimension $n-2$ and is isomorphic to the barycentric subdivision of the boundary of an $(n-1)$-simplex. It can be realised geometrically as a triangulation of the $(n-2)$-sphere.  This complex is the dual to the permutohedron of order $n$ (see \cref{fig_permutohedron_dual}). \cref{coxeter_4} depicts the Coxeter complex $\CoxComplex[{\Sym[4]}]$. 
The top-dimensional simplices of $\CoxComplex[{\Sym}]$ are in one-to-one correspondence with the elements of $\Sym$. Two such simplices share a codimension-1 face if and only if the corresponding permutations differ by precomposing with an adjacent transposition $(i,i+1)$, i.e.~by exchanging two neighbouring entries of the permutation.
As a consequence, if $x$ lies in the interior of a maximal simplex of the geometric realisation of $\CoxComplex$, it can be assigned a permutation $\tau \in \Sym$.
If $x$ lies on a face of dimension $k$, then $\tau$ is well-defined only up to applying an element of a parabolic subgroup $P\leq \Sym$ that is generated by $|S|-1-k = n-2-k$ adjacent transpositions. 
A concrete embedding of $\CoxComplex$ in $\mbR^n$ will be described in more detail in  \cref{sec_cox_coords_Rn}.

For later reference, we note that the identification $S^{n-2} \cong \CoxComplex$ gives a \emph{stratification} of the sphere by its simplicial decomposition. The strata are the (closed) simplices of the geometric realisation and the stratification is over the partially ordered set (poset) specified by \cref{eq_poset_CoxComplex}.

\begin{definition} \cite{haefliger} \label{def_strat}
A set $X$ is \emph{stratified} over a poset $\mathcal P$ if there exists a collection of subsets $\{X_i\}_{i \in \mathcal P}$ of $X$ such that:

\begin{enumerate}
    \item $X = \bigcup_i X_i$;
    \item $i\leq j $ if and only if $X_i \subseteq X_j$;
    \item if $X_i \cap X_j \neq \emptyset,$ then it is a union of strata;
    \item For every $x \in X$, there exists a unique $i_x \in \mathcal P$ such that $\bigcap_{X_i \ni x} X_i = X_{i_x}$.
\end{enumerate}
Each $X_i$ is called a \emph{stratum}.
\end{definition}

\section{Coxeter complex coordinates on $\mbR^n$}
\label{sec_cox_coords_Rn}
In this section, we describe $\mbR^n$ as the product of a cone over the Coxeter complex $\CoxComplex$  with a $1$-dimensional space orthogonal to it. This description is obtained by describing a standard way for realising $\Sym$ as a reflection group \cite[Example 1.11]{AB:Buildings}. In terms of Coxeter groups, this is often called the ``dual representation'', see e.g.~\cite[Section 2.5.2]{AB:Buildings}.
\cref{example_coxeter_S3} below goes through the following steps in detail for the case $n=3$.

In what follows, we will consider $\mbR^n$ with the $l^2$-norm $\lVert \cdot \rVert$ that is induced by the standard scalar product $\ll \cdot , \cdot \rr$. 
We let $e_1,\ldots, e_n$ denote the standard basis.
The symmetric group $\Sym$ acts on $\mbR^n$ by permuting this standard basis.
This action can be expressed in coordinates as
\begin{equation}\label{eq_action}
     \gamma \cdot (x_1,\ldots, x_n) = (x_{\gamma^{-1}(1)},\ldots, x_{\gamma^{-1}(n)}).
\end{equation}
It is norm-preserving and fixes the $1$-dimensional subspace $L=\langle e \rangle$ spanned by $e \coloneqq e_1 +\cdots + e_n = (1,\ldots,1)$.
Hence, there is an induced action on the orthogonal complement $V = e^\perp$, which can be described as
\begin{equation*}
    V = \left\{(x_1,\ldots, x_n) \in \mbR^n \, \middle|\, \Sigma_{i=1}^n x_i = 0 \right\}.
\end{equation*}
Note that $L$ is the subspace consisting of all $(x_1, \ldots, x_n )\in \mbR^n$ where $x_i = x_j$ for all $i,j$. So in particular, every $(x_1, \ldots, x_n )\in \mbR^n\setminus L$ has at least two coordinates that are different from one another.

The subspace $V$ has a natural structure of a cone over the Coxeter complex $\CoxComplex$ associated to $\Sym$, see \cref{rem_cone}.
The transposition $(i,j)\in \Sym$ acts on $V$ by orthogonal reflection along the hyperplane 
\begin{equation*}
    \left\{(x_1,\ldots, x_n) \in \mbR^n \, \middle|\, x_i = x_j \right\},
\end{equation*}
permuting the $i$-th and $j$-th coordinates.
Let $\mcH$ be the collection of all these hyperplanes, and let $S_r$ denote the $(n-2)$-sphere of radius $r>0$ around the origin in $V$ (with respect to the norm induced by the restriction of the standard scalar product on $\mbR^n$), i.e.~$S_r = \{ v \in V \mid \Vert v \Vert = r\}$. 

\begin{lemma}[{\cite[Examples 1.10, 1.4.7 \& 1.81]{AB:Buildings}}]\label{lem:sphere_coxcom}
The hyperplanes $\mcH$ induce a triangulation of $S_r$. The resulting simplicial complex $\Sigma$ is isomorphic to the Coxeter complex $\CoxComplex$ as $\Sym$-spaces.
\end{lemma}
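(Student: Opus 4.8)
The plan is to verify directly that the hyperplane arrangement $\mcH$, restricted to the sphere $S_r \subset V$, produces the stated triangulation and that this triangulation is $\Sym$-equivariantly isomorphic to $\CoxComplex$. Since the excerpt cites \cite[Examples 1.10, 1.4.7 \& 1.81]{AB:Buildings} for this fact, I expect the ``proof'' to be a short argument assembling these pieces rather than a from-scratch construction; I would structure my own proof to mirror that.

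First I would recall the standard setup: the transpositions $(i,j)$ act on $V$ as orthogonal reflections in the hyperplanes $H_{ij} = \{x_i = x_j\} \cap V$, and together they generate $\Sym$ acting as a finite reflection group on $V \cong \mbR^{n-1}$. The complement $V \setminus \bigcup \mcH$ decomposes into open connected components, the \emph{chambers}; the fundamental chamber is $C_0 = \{x_1 > x_2 > \cdots > x_n\} \cap V$, and $\Sym$ permutes the chambers simply transitively, so chambers correspond bijectively to elements of $\Sym$ (the chamber $\gamma C_0$ consists of points whose coordinates, sorted decreasingly, occur in the order prescribed by $\gamma$). This is the content of the general theory of finite reflection groups, which is exactly \cite[Example 1.10]{AB:Buildings}.

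Next I would pass to the sphere. Because every hyperplane in $\mcH$ passes through the origin, the arrangement is \emph{essential and central}, so intersecting with $S_r$ turns the conical cell structure on $V$ into a regular cell structure on the sphere: each chamber cone meets $S_r$ in a spherical simplex (an intersection of $n-1$ closed half-spaces meeting $S_r$ in a simplex of dimension $n-2$), the codimension-$k$ faces of the cone decomposition meet $S_r$ in $(n-2-k)$-simplices, and the face poset is unchanged by the radial projection $V\setminus\{0\}\to S_r$. One checks that this cell complex is genuinely simplicial — this is where \cite[Example 1.4.7]{AB:Buildings} enters, identifying the spherical chamber complex of a finite reflection group with a simplicial complex whose simplices are indexed by cosets $\gamma P_T$ with $P_T$ the stabiliser (pointwise) of a face of $\overline{C_0}$; for $\Sym$ these stabilisers are exactly the standard parabolics generated by subsets $T$ of adjacent transpositions, because the walls of $C_0$ are the hyperplanes $\{x_i = x_{i+1}\}$. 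Matching this with \cref{def_cox_complex} gives a poset isomorphism $\Sigma \cong \CoxComplex$, and since the $\Sym$-action on both sides is left multiplication on cosets $\gamma P_T$ (equivalently, the linear action permuting faces), the isomorphism is $\Sym$-equivariant; this last identification is \cite[Example 1.81]{AB:Buildings}.

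The main obstacle, and the step I would spend the most care on, is verifying that the induced cell structure on $S_r$ is actually \emph{simplicial} rather than merely polytopal — equivalently, that each closed chamber meets $S_r$ in a simplex and the incidence pattern is that of the barycentric subdivision of $\partial\Delta^{n-1}$. The clean way to see this is to observe that $\overline{C_0}$ is a simplicial cone: it is cut out by the $n-1$ independent inequalities $x_1 \ge x_2 \ge \cdots \ge x_n$, so it has exactly $n-1$ facets (the walls $x_i = x_{i+1}$) and its faces are obtained by turning some of these into equalities, giving $2^{n-1}$ faces indexed by $T \subseteq S$, exactly the faces of an $(n-2)$-simplex after intersecting with $S_r$. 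Translating around by $\Sym$ and checking that translates agree on overlaps (which follows from the simple transitivity on chambers together with the fact that two chambers sharing a wall agree on that wall) then shows the whole sphere is tiled simplicially, and the bookkeeping of which coset $\gamma P_T$ labels which simplex is routine. Everything else — norm-preservation, that $L = \langle e\rangle$ is fixed, that $V = e^\perp$ carries the essential action — has already been set up in the excerpt, so I would cite it and keep the argument brief.
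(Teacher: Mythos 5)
Your argument is correct and follows exactly the route the paper takes: the paper offers no proof beyond citing the finite-reflection-group theory in Abramenko--Brown (chambers in bijection with $\Sym$, faces of the closed fundamental chamber $x_1\ge\cdots\ge x_n$ stabilised by standard parabolics $P_T$, central essential arrangement restricting to a simplicial decomposition of the sphere, equivariance via left multiplication on cosets), and your write-up is precisely the standard argument behind those citations. No gaps worth flagging.
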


The set of points $x\in \mbR^n$ such that all coordinates are different is the \emph{configuration space} 
\[\textup{Conf}_n(\mbR)= \{ (x_1,\ldots, x_n) \in \mbR^n \mid i \neq j \implies x_i \neq x_j\}. \]
The previous lemma describes how a permutation in $\Sym$ can be associated to each point $x\in \textup{Conf}_n(\mbR).$
To understand why this is true, observe that if $C$ is a connected component of $S_r\backslash \bigcup \mcH$, then for all $(x_1,\ldots, x_n)\in C$:
\begin{itemize}
    \item if $i\not= j$, then $x_i\not= x_j$, i.e.~ $(x_1,\ldots, x_n)\in \textup{Conf}_n(\mbR)$;
    \item \label{order_connecte_comps} if $(y_1,\ldots, y_n)\in C$, then $y_i < y_j$ if and only if $x_i < x_j$.
\end{itemize}
In particular, there is a unique $\tau\in \Sym$ such that 
\begin{equation}
    \label{eq_conn_comp}
    (x_1,\ldots, x_n)\in C \iff x_{\tau(1)}< x_{\tau(2)} < \cdots < x_{\tau(n)}.
\end{equation}
In other words, the order of the elements $x_1,\ldots, x_n$ is given by $\tau( (1, \ldots, n) )$, see \cref{coxeter_4} above for the case $n=4$.
The connected components of $S_r\backslash \bigcup \mcH$ are exactly the (interiors of) the maximal simplices of $\Sigma$. Sending each such component $C$ to the facet of $\CoxComplex$ that corresponds to the permutation $\tau$ defined by \cref{eq_conn_comp} gives the desired isomorphism $\Sigma\cong \CoxComplex$. 
\newline

Using spherical coordinates, we can express every point $v\in V$ in terms of a radial component $r> 0$ and an angular component, which is equivalent to specifying a point $v_{\theta}\in S_r$ (i.e.~a point in the geometric realisation of \CoxComplex). 
The upshot of this is that we obtain a new set of coordinates for points in $\mbR^n\setminus L$.
\begin{proposition}
\label{prop_CoxCoordsRn}
Let $n \geq 2$.
There exist two projection maps \[p:\mbR^n \longrightarrow \mbR \times \mbR_{\geq 0}: x \mapsto ,(\bar{x}, \lVert v_x \rVert), \] where $\bar{x}=\frac{1}{n}\sum_{i=1}^n x_i$ and $\lVert v_x \rVert= \left( \sum_{i=1}^n |x_i-\bar{x}|^2\right)^{1/2}$, and
\[q:\mbR^n\setminus L \longrightarrow \CoxComplex\] that define a bijection 
\begin{equation*}
    (\restr{p}{\mbR^n \setminus L},q):\mbR^n \setminus L \longrightarrow \mbR \times \mbR_{>0} \times \CoxComplex.
\end{equation*}
Let $\Sym$ act on $\mbR^n$ by permuting the coordinates (\cref{eq_action}) and on the product $\mbR \times \mbR_{>0} \times \CoxComplex$ by extending the action on $\CoxComplex$ trivially on the first two factors. Then the map $(\restr{p}{\mbR^n \setminus L},q)$ is $\Sym$-equivariant.
\end{proposition}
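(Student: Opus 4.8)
The plan is to decompose $\mbR^n$ orthogonally as $L \oplus V$, read off the two components of $p$ as the scalar projections onto $L$ and (the radius in) $V$, and identify $q$ with the composition of the projection onto $V\setminus\{0\}$ with radial projection onto a sphere $S_r$, which by \cref{lem:sphere_coxcom} is $\CoxComplex$. First I would write, for $x = (x_1,\dots,x_n)$, the decomposition $x = \bar{x}\, e + v_x$ where $\bar{x} = \frac{1}{n}\sum_i x_i$ and $v_x = x - \bar{x} e \in V$ (one checks $\sum_i (x_i - \bar{x}) = 0$, so indeed $v_x \in V$, and $\langle v_x, e\rangle = 0$). Since $e = (1,\dots,1)$ has $\lVert e\rVert^2 = n$, the $L$-component of $x$ is $\bar{x} e$ with $\lVert \bar{x} e\rVert = \sqrt{n}\,|\bar{x}|$, and the $V$-component has $\lVert v_x\rVert = \big(\sum_i |x_i - \bar{x}|^2\big)^{1/2}$, matching the formulas in the statement. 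This gives $p(x) = (\bar{x}, \lVert v_x\rVert)$ and shows $p$ is well-defined on all of $\mbR^n$.

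Next I would establish the bijection. The orthogonal decomposition $x \mapsto (\bar{x}, v_x)$ is a linear isomorphism $\mbR^n \xrightarrow{\sim} \mbR \times V$. Restricting to $\mbR^n \setminus L$ corresponds exactly to $v_x \neq 0$, i.e.\ to $\mbR \times (V\setminus\{0\})$. Now use spherical coordinates on $V \setminus \{0\} \cong \mbR^{n-1}\setminus\{0\}$: the map $v \mapsto (\lVert v\rVert, v/\lVert v\rVert)$ is a bijection $V\setminus\{0\} \to \mbR_{>0} \times S_1$, with inverse $(r,u)\mapsto ru$. Composing, $x \mapsto (\bar{x}, \lVert v_x\rVert, v_x/\lVert v_x\rVert)$ is a bijection $\mbR^n\setminus L \to \mbR \times \mbR_{>0} \times S_1$. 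By \cref{lem:sphere_coxcom}, $S_1$ (as a simplicial complex cut out by $\mcH$) is isomorphic to $\CoxComplex$ as $\Sym$-spaces; fixing such an identification, I define $q(x)$ to be the image of $v_x/\lVert v_x\rVert$ under $S_1 \cong \CoxComplex$. Then $(\restr{p}{\mbR^n\setminus L}, q)$ is the claimed bijection onto $\mbR \times \mbR_{>0} \times \CoxComplex$.

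Finally, equivariance. The action of $\gamma \in \Sym$ permutes coordinates, hence is linear and orthogonal (\cref{eq_action} is norm-preserving), and fixes $L = \langle e \rangle$ pointwise since $e$ is permutation-invariant; therefore it preserves the decomposition $\mbR^n = L \oplus V$ and restricts to an orthogonal action on $V$. Because $\gamma$ fixes $e$, it fixes each $\bar{x}$: indeed $\sum_i x_{\gamma^{-1}(i)} = \sum_i x_i$, so $\overline{\gamma\cdot x} = \bar{x}$. Orthogonality gives $\lVert v_{\gamma\cdot x}\rVert = \lVert \gamma\cdot v_x\rVert = \lVert v_x\rVert$, and also $v_{\gamma \cdot x} = \gamma \cdot v_x$, so $v_{\gamma\cdot x}/\lVert v_{\gamma \cdot x}\rVert = \gamma\cdot\big(v_x/\lVert v_x\rVert\big)$. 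Thus $p$ is invariant in both coordinates — which is exactly the trivial action on the first two factors of $\mbR \times \mbR_{>0} \times \CoxComplex$ — and $q$ is equivariant because the isomorphism $S_1 \cong \CoxComplex$ of \cref{lem:sphere_coxcom} is $\Sym$-equivariant. Hence $(\restr{p}{\mbR^n\setminus L}, q)$ is $\Sym$-equivariant.

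The only genuinely substantive input is \cref{lem:sphere_coxcom} (the identification of the sphere in $V$ cut by the reflection hyperplanes with the Coxeter complex, equivariantly), which is quoted from \cite{AB:Buildings}; everything else is elementary linear algebra and the standard spherical-coordinate bijection. The one point to handle with a little care is that $q$ should be defined consistently with the chosen realisation $S_1 \cong \CoxComplex$ and that radial projection $S_r \to S_1$ is compatible with the simplicial structures for all $r>0$ — i.e.\ that the triangulation of $S_r$ in \cref{lem:sphere_coxcom} is the radial image of that of $S_1$ — so that the value of $q$ does not secretly depend on a choice of radius; this is immediate since the hyperplanes in $\mcH$ are linear and hence radially invariant.
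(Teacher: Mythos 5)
Your proposal is correct and follows essentially the same route as the paper: the orthogonal decomposition $\mbR^n = \ll e \rr \oplus V$, spherical coordinates on $V\setminus\{0\}$, the identification of the sphere with $\CoxComplex$ via \cref{lem:sphere_coxcom}, and equivariance from that lemma together with the permutation-invariance of $\bar{x}$ and $\lVert v_x\rVert$. You merely spell out some steps (the explicit bijection with $\mbR\times\mbR_{>0}\times S_1$ and the identity $v_{\gamma\cdot x}=\gamma\cdot v_x$) that the paper leaves as ``easy to see''.
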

\begin{proof}
For every $x \in \mbR^n$, the orthogonal decomposition $\mbR^n = \ll e \rr \oplus V$ gives a unique way to write $x = \bar{x}\cdot e + v_x$ with $\bar{x}\in \mbR$ and $v_x\in V$, where
\begin{equation*}
    \bar{x} = \frac{\ll e, x \rr}{\ll e,e \rr} = \sum_{i=1}^n x_i/n = \frac{1}{n}\sum_{i=1}^n x_i.
\end{equation*}
We can describe the projection $v_x = x-\bar{x}\cdot e \in V$ in spherical coordinates. Its norm (the radius of the sphere) is 
\begin{equation*}
    \lVert v_x \rVert = \lVert x-\bar{x}\cdot e \rVert = \left( \sum_{i=1}^n |x_i-\bar{x}|^2\right)^{1/2}, 
\end{equation*}
so $v_x$ is determined by this value together with a point $x_\theta$ on the $(n-2)$-sphere $S_{\lVert v_x \rVert }$, or equivalently on the geometric realisation of $\CoxComplex$. 
Notice that $x\in L$ if and only if $v_x=0$, as the line $L$ intersects $V$ at its origin.

\medskip

We define the map $p: \mbR^n  \longrightarrow \mbR \times \mbR_{\geq0}: x \mapsto (\bar{x}, \lVert v_x \rVert)$ and the map $q: \mbR^n \setminus L \longrightarrow S^{n-2}: x \mapsto x_\theta.$
The point $x_\theta$ is well-defined since $x \notin L$ and therefore there exist $i,j$ such that $x_i \neq x_j$.
It is easy to see that $(\restr{p}{\mbR^n \setminus L},q)$ is a bijection, i.e.~that given $c_1\in \mbR$, $c_2\in \mbR_{> 0}$ and $c_3\in \CoxComplex$, there is a unique $x\in \mbR^n \setminus L$ such that $c_1 = \bar{x}$, $c_2 = \lVert v_x \rVert$ and $c_3 = x_\theta$.

\medskip

The fact that $(\restr{p}{\mbR^n \setminus L},q)$ is $\Sym$-equivariant follows from \cref{lem:sphere_coxcom} and because permuting the coordinates of $x\in \mbR^n$ changes neither the average $\frac{1}{n}\sum_{i} x_i$ nor the standard deviation $\left( \sum_{i} |x_i-\bar{x}|^2\right)^{1/2}$.
\end{proof}

To summarise, every point  $x = (x_1,\ldots, x_n)\in \mbR^n \setminus L$ determines the following three things:
\begin{enumerate}
    \item its projection to $L$, given by $\bar{x} = \frac{1}{n}\sum_{i=1}^n x_i \in \mbR$;
    \item the norm of its projection to $V$, given by $\lVert v_x \rVert =\left( \sum_{i=1}^n |x_i-\bar{x}|^2\right)^{1/2} \in\nolinebreak \mbR_{> 0}$;
    \item a point $x_\theta$ in the geometric realisation of the Coxeter complex $\CoxComplex$ associated to $\Sym$.  \label{Coxeter_coord}
\end{enumerate}
Furthermore, $x$ is uniquely determined by these three coordinates.

\begin{remark}\label{rem_cone}
There is an isomorphism $\mbR_{> 0}\times \CoxComplex \cong \operatorname{cone}(\CoxComplex) \setminus \ls \ast \rs$, where 
\begin{equation*}
    \operatorname{cone}(\CoxComplex) = \CoxComplex \times [0,\infty) / (x,0)\sim (y,0).
\end{equation*}
Since $\mbR^n = \mbR^n \setminus L \sqcup L $, the above map $(\restr{p}{\mbR^n \setminus L},q)$ gives rise to a decomposition $\mbR^n \cong \operatorname{cone}(\CoxComplex) \times \mbR$. Indeed, the line $L \subset \mbR^n$ corresponds to points $x \in \mbR^n$ with $v_x=0$, which could be seen as ``spheres of radius $0$'' in the projection $q$.
\end{remark}

\label{record_regions_Rn}

\begin{example}
\label{example_coxeter_S3} 
We go through the previous construction in detail for the case of $\mbR^3$ equipped with the natural action of the symmetric group $\Sym[3]$, illustrating the example in \cref{coxeter_S3}. Consider $\mbR^3=\ll e_1,e_2,e_3 \rr$. The symmetric group $\Sym[3]$ acts on it by permuting the coordinates of each vector $(x_1,x_2,x_3)$:
\begin{equation*}
     \gamma \cdot (x_1,x_2, x_3) = (x_{\gamma^{-1}(1)},x_{\gamma^{-1}(2)}, x_{\gamma^{-1}(3)}).
\end{equation*}
Each $\gamma \in \Sym[3]$ can be written as a product of transpositions $(i,j)$ and its action on $\mbR^3$ is given by the performing the corresponding sequence of reflections along the hyperplanes $x_i=x_j$. The three (2-dimensional) planes corresponding to the equations $x_1=x_2$, $x_2=x_3$ and $x_1=x_3$ are indicated as lines on the left hand side of \cref{coxeter_S3} to make the picture clearer. 
The subspace $L$ that is invariant under this action is spanned by the vector $(1,1,1)=e$, shown in red in \cref{coxeter_S3}.

\begin{figure}
    \centering
    \includegraphics[width=\textwidth]{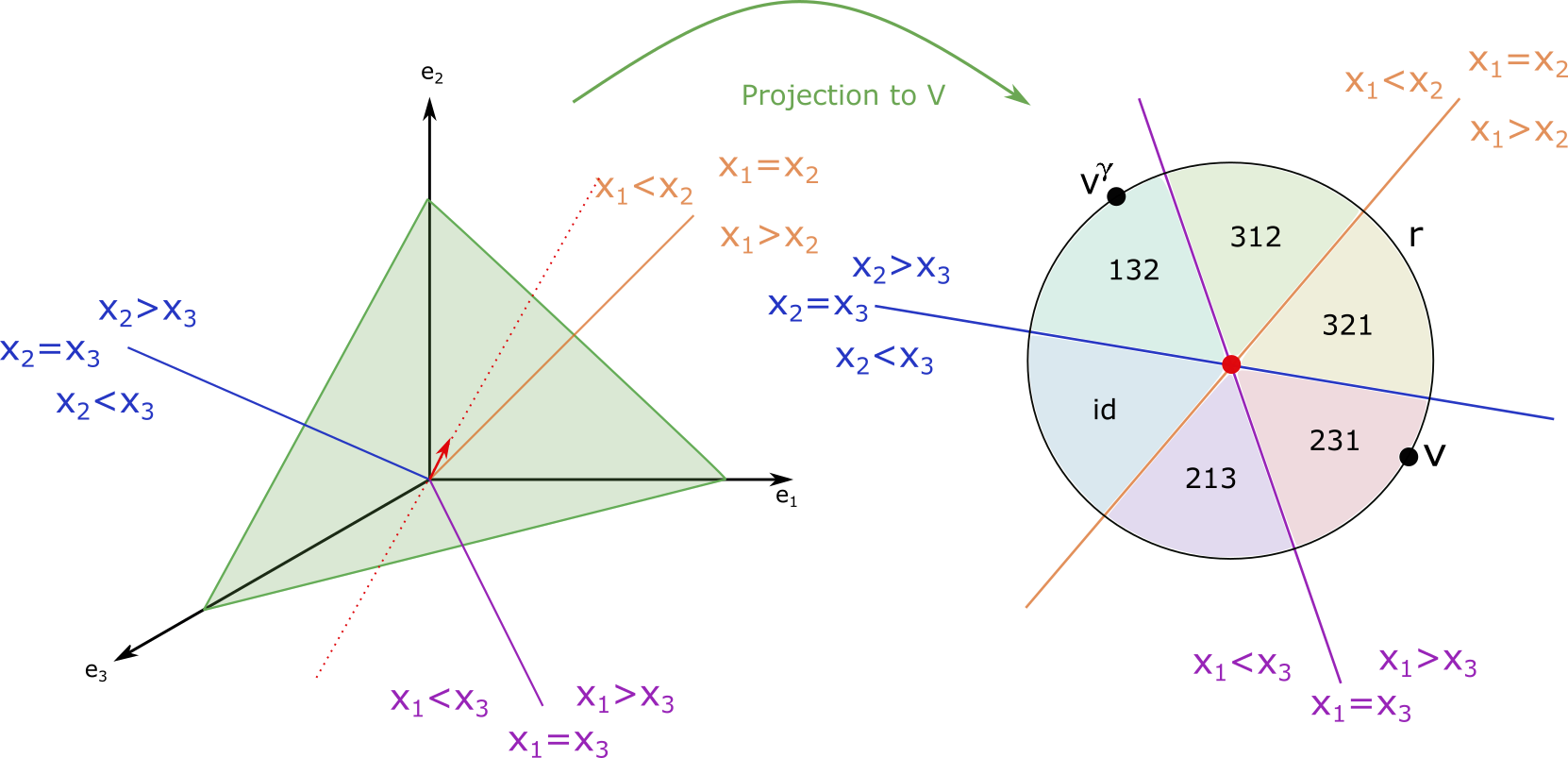}
    \caption{Example of the decomposition of $\mbR^3$ in Coxeter coordinates. }
    \label{coxeter_S3}
\end{figure}

We can define new coordinates on $\mbR^3$, lying in $\ll e \rr = L$ and $e^{\perp} =V$, a $2$-dimensional subspace whose affine shift is depicted in green in \cref{coxeter_S3}, reflecting the decomposition of $\mbR^3$ into a product of $\ll e \rr$ and $V$. A point $x \in \mbR^3$ can now be written as $\bar{x} \cdot e + v_x$, where $\bar{x} \in \mbR$ and $v_x \in V$.
\\
We show on the right hand side of \cref{coxeter_S3} how $V$, represented as $\mbR^2$, has the structure of a cone over a Coxeter complex. 
 The figure shows the projections of the planes $x_1=x_2$, $x_2=x_3$ and $x_1=x_3$ and the intersection of $V$ with the subspace $\ll e\rr $ (red dot). 
To obtain the cone structure on $V$, we give it spherical coordinates (i.e.~polar coordinates in this case).
The first coordinate is 
the radius $r$, which determines a $1$-sphere centred at the origin (the black circle). 
On the circle, a point $v_x$ is determined by an angle $x_\theta$. 
Intersecting the circle with the hyperplanes, we decompose it into $|\Sym[3]|=6$ (coloured) strata indexed by the symmetric group and forget about the angle $x_\theta$. For instance, if $v = (v_1,v_2,v_3)$ with $v_2<v_3<v_1$, the point $v$ lies in the stratum indexed by $[231]$; this is the unique region that lies on those sides of the hyperplanes that satisfy $x_1>x_2$, $x_2<x_3$ and $x_1>x_3$.

Let $\gamma=(12)$. It acts on $v$ via $\gamma \cdot v = (v_{\gamma^{-1}(1)},v_{\gamma^{-1}(2)},v_{\gamma^{-1}(3)}) = (v_2,v_1,v_3)$. We denote its image by $  v^\gamma \coloneqq \gamma \cdot v$. The order of the coordinates of $v^\gamma$ satisfies
$v_1^\gamma \leq v_3^\gamma \leq v_2^\gamma$, so $v^\gamma$ lies in the stratum indexed by the permutation $[132]$. The image $v^\gamma$ of $v$ through the action of $\gamma$ corresponds to the reflection through the hyperplane $x_1=x_2$.
\end{example}

\begin{remark}\label{rem_uniquely_det}
There are two special cases in \cref{prop_CoxCoordsRn}, when $x_i = x_j$ for all $i,j$, i.e.~$(x_1, \ldots, x_n)\in L$ and when $x_i \not= x_j$ for all $i\not= j$, i.e.~$(x_1, \ldots, x_n)\in \textup{Conf}_n(\mbR)$. For the former, we have $p(x)=(\bar{x},\lVert v_x \rVert)=(x_i,0) $ and $x_\theta$ is not defined. For the latter, 
$q(x)=x_\theta$ lies in the interior of a top-dimensional simplex of $\CoxComplex$. Hence, it determines a \emph{unique} element $\tau_x \in \Sym$.
In fact, these are just the two extremes of a family of situations that can occur:

If $x_i=x_j$ for some $i \neq j$, then $x_\theta$ lies on the corresponding hyperplane in $\mcH$ and hence on a lower-dimensional face of $\CoxComplex$. There exists a permutation $\tau \in \Sym$ such that \begin{equation*}
    x_{\tau(1)}\leq x_{\tau(2)} \leq \cdots \leq x_{\tau(n)},
\end{equation*} but $\tau$ is not unique. It is defined only up to multiplication by the subgroup 
\begin{equation*}
    P = \ls \gamma \in \Sym \,\middle| \, x_{\tau(i)} = x_{\tau \gamma(i)} \rs.
\end{equation*}
Note that $P$ is generated by adjacent transpositions $(i,i+1)$,
i.e.~it is of the form $\ll T \rr$, where $T\subset S$ is a subset of the set $S$ of simple reflections of $\Sym$. Hence, it is a parabolic subgroup of $\Sym$ (see \cref{sec_cox_background}).
The number of adjacent transpositions in $P$ depends on how many coordinates of $(x_1, \ldots, x_n)$ agree, or, equivalently, the number of hyperplanes in $\mcH$ it lies on. Intuitively speaking, one could phrase this as ``the more of the $x_i$'s take the same value, the less `permutation information' is left''.
The coset 
\begin{equation*}
        \tau P = \{ \rho \in \Sym \mid x_{\rho(1)} \leq \ldots \leq x_{\rho(n)}\},
\end{equation*}
corresponds to the lowest dimensional face of $\CoxComplex$ that $x$ lies on. It depends only on the values of the $x_i$, not on the choice of $\tau$. If $x\in L$, we have $\tau P=\Sym$. This could be interpreted as the degenerate case where $x_\theta$ lies on the unique $(-1)$-dimensional face of $\CoxComplex$ (see \cref{def_cox_complex}).
\end{remark}

\section{Coxeter coordinates for the space of barcodes} \label{sec_cox_coord}

\subsection{Describing $\mathcal{B}_n$ as a quotient}\label{sec_alt_description}
In this section, we describe $\mathcal{B}_n$ as a subset of a quotient of $\mbR^{2n}$. This will be used in the next section to equip this space with Coxeter complex coordinates.

Let $\targetspace \coloneqq \Sym \backslash \mbR^n \times \mbR^n $, where $\Sym$ acts diagonally by permuting the coordinates, i.e.~for $\gamma\in \Sym$, we set 
\begin{equation*}
    \gamma \cdot (x_1,\ldots, x_n, y_1,\ldots , y_n) = (x_{\gamma^{-1}(1)},\ldots, x_{\gamma^{-1}(n)}, y_{\gamma^{-1}(1)},\ldots , y_{\gamma^{-1}(n)}).
\end{equation*}
The elements of $\targetspace$ are equivalence classes of tuples $(x_1,\ldots, x_n, y_1,\ldots , y_n) \in \mbR^n \times \mbR^n$, which are denoted by $[x_1,\ldots, x_n, y_1,\ldots , y_n]$.

\begin{remark}
We write $\targetspace \coloneqq \Sym \backslash \mbR^n \times \mbR^n $ to emphasise that $\Sym$ acts from the left on this space. The reason we stress this is that later on, we will combine the statements here with descriptions of the Coxeter complex. There, the simplices are given by cosets $\tau P$ and the symmetric group acts on them by \emph{left} multiplication.
\end{remark}

There is a map $\phi$ from the space of barcodes with $n$ bars to $\targetspace$ given by
\begin{align*}
    \phi: \mathcal{B}_n &\to X = \Sym \backslash \mbR^n \times \mbR^n \\
\ls (b_i,d_i) \rs_{i \in \{1,..., n\}} &\mapsto [b_1,\ldots, b_n, d_1,\ldots, d_n].
\end{align*}
The image of $\phi$ is independent of the choice of indices for the bars of the barcode because the action of $\Sym$ is factored out. The map $\phi$ is clearly injective, but it is not surjective as the birth time of a homology class is always smaller than its death time. The image of $\phi$ is the subspace $\subtargetspace$ of $\targetspace$ given by 
\begin{equation*}
     \subtargetspace \coloneqq \Sym\backslash \ls (x_1,\ldots, x_n, y_1,\ldots , y_n) \in \mbR^n \times \mbR^n \,\middle|\, x_i < y_i \, \forall \,i \rs . 
\end{equation*}
For later reference, we note this observation in the following.
\begin{proposition}
\label{prop_bijection_B_n}
The map $\phi$ defines a bijection $\mathcal{B}_n\to \subtargetspace \subset \Sym \backslash \mbR^n \times \mbR^n$.
\end{proposition}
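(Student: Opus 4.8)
The plan is to verify directly that $\phi$ is a well-defined bijection onto $\subtargetspace$ by checking well-definedness, injectivity, and surjectivity separately, since each is essentially a bookkeeping argument about the $\Sym$-action.

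First I would check that $\phi$ is well-defined, i.e.\ independent of the chosen indexing of the bars. A barcode is a multiset $\ls (b_i,d_i) \rs_{i \in \{1,\dots,n\}}$, and any reindexing corresponds to applying some $\gamma \in \Sym$ to the index set, replacing $(b_i,d_i)_i$ by $(b_{\gamma^{-1}(i)}, d_{\gamma^{-1}(i)})_i$. Under $\phi$ this sends the tuple $(b_1,\dots,b_n,d_1,\dots,d_n)$ to $\gamma \cdot (b_1,\dots,b_n,d_1,\dots,d_n)$ in the notation of the diagonal action above, so the two tuples represent the same class in $\targetspace$. Hence $\phi$ does not depend on the indexing and is a genuine function on $\mathcal{B}_n$. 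I would also note that the image indeed lands in $\subtargetspace$: for any barcode we have $b_i < d_i$ for all $i$ by definition, and this inequality is preserved under the diagonal action (it permutes the pairs $(x_i,y_i)$ simultaneously), so the whole equivalence class lies in $\subtargetspace$.

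Next, injectivity. Suppose $\phi(B) = \phi(B')$ for barcodes $B = \ls(b_i,d_i)\rs_i$ and $B' = \ls(b'_i,d'_i)\rs_i$. Then $[b_1,\dots,b_n,d_1,\dots,d_n] = [b'_1,\dots,b'_n,d'_1,\dots,d'_n]$, meaning there is $\gamma \in \Sym$ with $(b'_1,\dots,b'_n,d'_1,\dots,d'_n) = \gamma \cdot (b_1,\dots,b_n,d_1,\dots,d_n) = (b_{\gamma^{-1}(1)},\dots,b_{\gamma^{-1}(n)}, d_{\gamma^{-1}(1)},\dots, d_{\gamma^{-1}(n)})$. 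Thus $(b'_i,d'_i) = (b_{\gamma^{-1}(i)}, d_{\gamma^{-1}(i)})$ for each $i$, so $B'$ is obtained from $B$ by the relabelling $\gamma$; as multisets, $B = B'$. For surjectivity onto $\subtargetspace$, take any class $[x_1,\dots,x_n,y_1,\dots,y_n] \in \subtargetspace$; by definition of $\subtargetspace$ we may choose a representative with $x_i < y_i$ for all $i$ (the defining condition is $\Sym$-invariant, so some — in fact every — representative satisfies it). Then $\ls(x_i,y_i)\rs_i$ is a valid barcode in $\mathcal{B}_n$, and $\phi$ of it is exactly the given class. Hence $\phi$ is a bijection $\mathcal{B}_n \to \subtargetspace$.

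This argument has no real obstacle — the only subtlety worth stating carefully is the equivalence between ``multiset of bars'' and ``tuple up to the diagonal $\Sym$-action'', and the fact that the condition $x_i < y_i \ \forall i$ is $\Sym$-invariant under the \emph{diagonal} action (it would not be under independent permutations of the two blocks), which is exactly why $\subtargetspace$ is well-defined and why every representative of a class in $\subtargetspace$ gives a legitimate barcode. Everything else is immediate from the definitions, so I would keep the write-up short, essentially recording the three checks above.
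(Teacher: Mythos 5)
Your proposal is correct and follows the same route as the paper, which treats the statement as an immediate observation: well-definedness because reindexing the bars is exactly the diagonal $\Sym$-action that is quotiented out, injectivity because orbit-equivalent tuples give the same multiset, and surjectivity because the condition $x_i<y_i$ characterising $\subtargetspace$ is preserved by the diagonal action. Your write-up simply spells out these checks (including the worthwhile remark about diagonal versus independent permutations) in more detail than the paper does.
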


In \cref{sec_metric_B_n}, we equip $\mathcal{B}_n$ with metrics inspired by the bottleneck and Wasserstein distances. The map $\phi$ is an isometry with respect to these metrics.

\subsection{Coxeter complexes for birth and death}\label{sec_cox_coords_birthdeath}

We now introduce the Coxeter complex coordinates for $\mathcal{B}_n$. These coordinates are obtained by applying the map $(\restr{p}{\mbR^n \setminus L},q)$ of \cref{prop_CoxCoordsRn} to the two copies of $\mbR^n$ in $Y$.

\begin{theorem}
\label{thm_CoxCoords}
Every barcode $\ls (b_i,d_i) \rs_{i \in \{1,..., n\}} \in \mathcal{B}_n$ such that at least two of the $b_i$ and two of the $d_i$ are different from each other determines the following five data:
\begin{enumerate}
    \item its \emph{average birth time} $\bar{b} = \sum_{i=1}^n b_i /n \in \mbR$;
    \item its \emph{average death time} $\bar{d} = \sum_{i=1}^n d_i / n \in \mbR$;
    \item its \emph{birth standard deviation} $\lVert v_b \rVert = \left(\sum_{i=1}^n |b_i-\bar{b}|^2\right)^{1/2} \in \mbR_{> 0}$;
    \item its \emph{death standard deviation} $\lVert v_d \rVert = \left(\sum_{i=1}^n |d_i-\bar{d}|^2\right)^{1/2} \in \mbR_{> 0}$;
    \item an orbit $\Sym \cdot (b_\theta, d_\theta) \in \Sym  \backslash \CoxComplex \times \CoxComplex.$
\end{enumerate}
Furthermore, these five data uniquely determine $B$.
\end{theorem}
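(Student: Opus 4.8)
The plan is to deduce \cref{thm_CoxCoords} directly from \cref{prop_bijection_B_n} and \cref{prop_CoxCoordsRn} by applying the latter coordinate-wise to the two factors of $\mbR^n$. First I would recall that by \cref{prop_bijection_B_n}, a barcode $B = \ls (b_i,d_i) \rs \in \mathcal{B}_n$ corresponds to the class $[b_1,\ldots,b_n,d_1,\ldots,d_n] \in \subtargetspace \subset \Sym \backslash \mbR^n \times \mbR^n$, where $\Sym$ acts diagonally. The hypothesis that at least two of the $b_i$ differ and at least two of the $d_i$ differ says exactly that $(b_1,\ldots,b_n) \in \mbR^n \setminus L$ and $(d_1,\ldots,d_n) \in \mbR^n \setminus L$, so that the map $(\restr{p}{\mbR^n\setminus L}, q)$ of \cref{prop_CoxCoordsRn} is defined on each factor.

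Next I would assemble the product map. Applying $(\restr{p}{\mbR^n\setminus L}, q)$ to the birth vector and (a second copy of it) to the death vector gives a bijection
\begin{equation*}
    (\mbR^n \setminus L) \times (\mbR^n \setminus L) \xrightarrow{\ \sim\ } \big(\mbR \times \mbR_{>0} \times \CoxComplex\big) \times \big(\mbR \times \mbR_{>0} \times \CoxComplex\big),
\end{equation*}
sending $(b_1,\ldots,b_n,d_1,\ldots,d_n)$ to $(\bar b, \lVert v_b\rVert, b_\theta, \bar d, \lVert v_d\rVert, d_\theta)$. By \cref{prop_CoxCoordsRn}, each of the two factor maps is $\Sym$-equivariant for the given diagonal action and the action that is trivial on the $\mbR \times \mbR_{>0}$ coordinates; hence the product map is $\Sym$-equivariant for the diagonal action of $\Sym$ on the left and the action on the right that permutes only the pair $(b_\theta, d_\theta)$ diagonally while fixing $\bar b, \bar d, \lVert v_b\rVert, \lVert v_d\rVert$. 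Passing to quotients, an equivariant bijection descends to a bijection of orbit spaces, so we get a bijection between the relevant subset of $\Sym \backslash (\mbR^n\times\mbR^n)$ and $\mbR \times \mbR_{>0} \times \mbR \times \mbR_{>0} \times \big(\Sym \backslash (\CoxComplex \times \CoxComplex)\big)$. (Here one uses that $\Sym$ acts trivially on the four real coordinates, so the quotient of the product is the product of the quotients.) Restricting along $\phi$ from \cref{prop_bijection_B_n} then shows that $B$ determines, and is determined by, the five data listed; the values $\bar b = \sum b_i/n$, $\lVert v_b\rVert = (\sum|b_i-\bar b|^2)^{1/2}$ and the analogues for $d$ are exactly the formulas from $p$ in \cref{prop_CoxCoordsRn}, and $\Sym\cdot(b_\theta,d_\theta)$ is the image of $(q(b),q(d))$ in $\Sym\backslash \CoxComplex\times\CoxComplex$.

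The one point needing a little care — and the main (minor) obstacle — is the bookkeeping around quotients: the coordinate maps are defined on $\mbR^n \setminus L$ rather than all of $\mbR^n$, and we must check that the diagonal $\Sym$-action preserves the subset $(\mbR^n\setminus L)\times(\mbR^n\setminus L)$ (clear, since permuting coordinates neither creates nor destroys a coincidence $b_i=b_j$) and that the descent to orbit spaces is a genuine bijection and not merely a surjection. The latter holds because the product map is an equivariant \emph{bijection}: it is injective and surjective, and for an equivariant bijection $f\colon A \to B$ the induced map $G\backslash A \to G\backslash B$ is automatically a bijection (injectivity uses equivariance, surjectivity is immediate). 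Finally, one should note that the action on the target genuinely only moves the Coxeter-complex coordinates, which is precisely the statement that $p$ is $\Sym$-invariant in \cref{prop_CoxCoordsRn}; this is why the four real numbers $\bar b, \bar d, \lVert v_b\rVert, \lVert v_d\rVert$ are well-defined on the orbit $\phi(B)$ and not just on a chosen representative, while only the pair $(b_\theta,d_\theta)$ must be recorded up to the diagonal $\Sym$-action. Combining these observations gives both the "determines" and the "uniquely determined by" halves of the statement.
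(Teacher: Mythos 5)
Your proposal is correct and follows essentially the same route as the paper's proof: map $B$ via $\phi$ into $\Sym\backslash(\mbR^n\setminus L \times \mbR^n\setminus L)$, apply the product of two copies of the $\Sym$-equivariant bijection $(\restr{p}{\mbR^n \setminus L},q)$ from \cref{prop_CoxCoordsRn}, and observe that the induced bijection on quotients leaves $\bar b, \bar d, \lVert v_b\rVert, \lVert v_d\rVert$ untouched while recording $(b_\theta,d_\theta)$ only up to the diagonal $\Sym$-action. Your extra checks (that the action preserves $(\mbR^n\setminus L)\times(\mbR^n\setminus L)$ and that an equivariant bijection descends to a bijection of orbit spaces) are points the paper leaves implicit, but the argument is the same.
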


\begin{proof}
Let $B=\ls (b_i,d_i) \rs_{i \in \{1,..., n\}}$ be such that at least two $b_i$ and two $d_i$ are different. By assumption, both $(b_1,\ldots, b_n)$ and $(d_1,\ldots, d_n)$ are points in $\mbR^n \setminus L$. The image of $B$ under $\phi$ (\cref{prop_bijection_B_n}) is $$\phi(B)= [b_1,...,b_n,d_1,...,d_n] \in \Sym \backslash ( \mbR^n \setminus L \times \mbR^n \setminus L).$$
Since the map $(\restr{p}{\mbR^n \setminus L},q)$ is $\Sym$-equivariant (\cref{prop_CoxCoordsRn}), it induces a bijection 
\begin{equation*}
    \Sym \backslash \big( \mbR^n \setminus L \times \mbR^n \setminus L \big) \cong \Sym \backslash \big(\mbR \times \mbR_{>0} \times \CoxComplex) \times  (\mbR \times \mbR_{>0} \times \CoxComplex)\big).
\end{equation*}
The image of $[b_1,...,b_n,d_1,...,d_n]$ under this bijection is the $\Sym$-orbit of
$$(\restr{p}{\mbR^n \setminus L},q)^2(b_1,...,b_n,d_1,...,d_n)= (\bar{b},\lVert v_{b} \rVert,b_\theta, \bar{d},\lVert v_{d} \rVert,d_\theta) .$$

The claim now follows since the action of $\Sym$ on $(\bar{b},\lVert v_{b} \rVert,b_\theta, \bar{d},\lVert v_{d} \rVert,d_\theta)$ is trivial on $\bar{b}$, $\lVert v_{b} \rVert$, $\bar{d}$, $\lVert v_{d} \rVert$ and is given by the action of $\Sym$ on the Coxeter complex $\CoxComplex$ for $b_\theta,d_\theta$.
\end{proof}

\subsection{A stratification of $\mathcal{B}_n$}\label{sec_stratification}
In this section, we describe the stratification that we obtain from the description of $\mathcal{B}_n$ in terms of Coxeter complexes.

We start by extending \cref{def_permutation}, the permutation assigned to a strict barcode, to the general case of $\mathcal{B}_n$. For non-strict barcodes, we cannot uniquely assign a permutation. However, there is a nice description of the set of all possible such permutations in terms of double cosets of parabolic subgroups:
\begin{definition}\label{def_para_subgroups}
For a barcode  $B = \ls (b_i,d_i) \rs_{i \in \{1,..., n\}} \in \mathcal{B}_n$, let $\tau_b$ and $\tau_d$ be elements of $\Sym$ such that
$b_{\tau_b(1)} \leq \ldots \leq b_{\tau_b(n)}$ and $d_{\tau_d(1)} \leq \ldots \leq d_{\tau_d(n)}$. Let
\begin{equation*}
    P_b^B = \ls \gamma\in \Sym \,\middle| \, b_{\tau_b(i)} = b_{ \tau_b \gamma (i)} \rs , \,
     P_d^B =  \ls \gamma\in \Sym \,\middle| \, d_{\tau_d(i)} = d_{ \tau_d \gamma (i)} \rs.
\end{equation*}
The \emph{double coset $D_B$ associated to $B$} is defined as $D_B \coloneqq P_b^B \tau_b^{-1}\tau_d P_d^B$.
\end{definition}

\begin{remark}
\label{rem_double_coset_well_def}
Note that while $\tau_b$ and $\tau_d$ depend on the ordering of the barcode, $P_b^B$ and $P_d^B$ do not.
The groups $P_b^B$ and $P_d^B$ are parabolic subgroups of $\Sym$, as was observed in \cref{rem_uniquely_det}.
The cosets 
\begin{equation*}
    \tau_b P_b^B = \{ \rho \in \Sym \mid b_{\rho(1)} \leq \ldots \leq b_{\rho(n)}\}
\end{equation*}
 and 
\begin{equation*}
    \tau_d P_d^B = \{ \rho \in \Sym \mid d_{\rho(1)} \leq \ldots \leq d_{\rho(n)}\},
\end{equation*}
which are the sets of permutations that preserve the order of the $b_i$ and $d_i$ respectively, do not depend on the indexing of $B$ either. Hence, the double coset $D_B = (\tau_b P_b^B)^{-1} \cdot \tau_d P_d^B $ is indeed an invariant of the barcode $B$. 
Furthermore, if $B$ is a strict barcode, then $P_b^B = \ls \id \rs = P_d^B$, so $D_B = \ls  \tau_b^{-1}\tau_d  \rs = \ls \sigma_B \rs$ and we recover the definition of \cite{TRN} as given in \cref{def_permutation}.
\end{remark}

\begin{example}
Let $$B= \{(b_1,d_1)=(1,10), (b_2,d_2)=(2,5), (b_3,d_3)= (4,5), (b_4,d_4)= (4,7)\} \in \mathcal{B}_4.$$ One has $b_1 < b_2 < b_3 = b_4$ and $d_2 = d_3 < d_4 < d_1$. Let $\tau_b = [1234]$ and $\tau_d = [2341]$. They satisfy $b_{\tau_b(1)} \leq \ldots \leq b_{\tau_b(4)}$ and $d_{\tau_d(1)} \leq \ldots \leq d_{\tau_d(4)}$ respectively, but so do $\tau_b'=[1243]$ and $\tau_d'=[3241]$. In this case, one has $P_b^B= \{\id, (34)\}$, $P_d^B=  \{\id, (12)\} $ and $\tau_b P_b^B =\{ [1234], [1243]\} $, $\tau_d P_d^B = \{[2341], [3241] \}$. 
The double coset
\begin{align*}
D_B
& = \{\gamma_b \tau_b^{-1}\tau_d \gamma_d \mid \gamma_b \in P_b^B, \gamma_d \in P_d^B\} 
\\
& = \{ \tau_b^{-1}\tau_d, \tau_b'^{-1}\tau_d, \tau_b^{-1}\tau_d' ,\tau_b'^{-1}\tau_d'\} \\
&=
 \{ [2341],[2431],[3241],[4231]\} 
\end{align*}
is the set of all the permutations $\sigma$ that satisfy that the $j$-th death (in increasing order) is paired with the $\sigma(j)$-th birth.
\end{example}

Recall that the Coxeter complex $\CoxComplex$ is a simplicial complex with simplices given by cosets of parabolic subgroups $\tau P$. This simplicial decomposition gives it the structure of a stratified space over the poset of cosets of parabolic subgroups equipped with reverse inclusion (see \cref{sec_cox_background}).
Taking the cone and products of these simplices yields a decomposition of 
\begin{equation}\label{eq_dec_Rn}
  \mbR^{2n} \cong \operatorname{cone}(\CoxComplex) \times \mbR \times \operatorname{cone}(\CoxComplex) \times \mbR  
\end{equation}
into strata that are compatible with the action of $\Sym$, i.e.~each stratum is sent to another stratum of same dimension by the action of $\Sym$. This follows from \cref{rem_cone} and the fact that $\CoxComplex$ is stratified and the map $(\restr{p}{\mbR^n \setminus L},q)$ of \cref{prop_CoxCoordsRn} is $\Sym$-equivariant. 
The strata in \cref{eq_dec_Rn} are indexed by pairs of cosets $(\tau_1 P_1, \tau_2 P_2),$ where $\tau_1,\tau_2 \in \Sym$ and $P_1,P_2\leq \Sym$ are parabolic subgroups\footnote{
Note that, following \cref{rem_uniquely_det}, the points in $\textup{Conf}_n(\mbR) \times \textup{Conf}_n(\mbR)  \subset \mbR^n \times \mbR^n$ are exactly the ones that belong to the top-dimensional strata. 
Similarly, the points of $L \times L \subset \mbR^n \times \mbR^n$ belong to the lowest dimensional strata, corresponding to the cone points in \cref{eq_dec_Rn}.
}. The partial ordering on these pairs is given component-wise by reverse inclusion (cf.~\cref{eq_poset_CoxComplex}).

It follows that the quotient $X = \Sym \backslash \mbR^{2n}$ is stratified over the quotient $\mcP$ of this poset by the action of $\Sym$. More concretely, $\mcP$ can be described as follows: The elements of $\mcP$ are orbits of the form $\Sym \cdot (\tau_1 P_1, \tau_2 P_2)$, where $\tau_1,\tau_2 \in \Sym$ and $P_1,P_2\leq \Sym$ are parabolic subgroups. The partial ordering is given by 
\begin{equation*}
    \Sym\cdot (\tau_1 P_1, \tau_2 P_2) \leq \Sym\cdot (\tau'_1 P'_1, \tau'_2 P'_2)
\end{equation*}
if there is $\gamma \in \Sym$ such that 
\begin{equation*}
    \tau_1 P_1 \supseteq \gamma \tau'_1 P'_1 \text{ and } \tau_2 P_2 \supseteq \gamma \tau'_2 P'_2.
\end{equation*}
This quotient poset $\mcP$ has a more explicit description in terms of another poset $\mcQ$, which consists of ``marked'' double cosets of parabolic subgroups:

\begin{definition}
Let $\mcQ$ be the poset consisting of all triples $(P_1,P_1 \sigma P_2,P_2)$, where $\sigma\in \Sym$ and $P_1,P_2\leq \Sym$ are parabolic subgroups and where
\begin{equation*}
    (P_1,P_1 \sigma P_2,P_2) \leq (P_1',P_1' \sigma P_2',P_2')
\end{equation*}
if and only if there is component-wise containment in the reverse direction, 
\begin{equation*}
    P_1\supseteq P_1', \, P_2\supseteq P_2' \text{ and } P_1 \sigma P_2 \supseteq P_1' \sigma P_2'.
\end{equation*}
\end{definition}
A very similar poset is also studied as a two-sided version of the Coxeter complex by Hultman \cite{Hultman2007} and Petersen \cite{Pet:twosidedanalogue}. 
We remark that $\mcQ$ is different from the poset of all double cosets of the form $P_1 \sigma P_2$: There can be $P_1\not = P_1', P_2\not = P_2'$ such that $P_1\sigma P_2 = P_1'\sigma P_2'$ (see \cite[Remark 4]{Pet:twosidedanalogue}).

\begin{lemma}
\label{isom_posets}
The map 
\begin{align*}
    \phi: \mcP &\to \mcQ \\
    \Sym\cdot (\tau_1 P_1, \tau_2 P_2) &\mapsto (P_1,P_1\tau_1^{-1}\tau_2 P_2,P_2)
\end{align*}
is an isomorphism of posets. 
\end{lemma}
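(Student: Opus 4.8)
The plan is to check that $\phi$ is well-defined, that it is a bijection, and that it together with its inverse is order-preserving. For well-definedness, I first need to verify that the triple $(P_1, P_1 \tau_1^{-1}\tau_2 P_2, P_2)$ depends only on the $\Sym$-orbit of the pair $(\tau_1 P_1, \tau_2 P_2)$, not on the chosen representative. If we replace $(\tau_1 P_1, \tau_2 P_2)$ by $(\gamma\tau_1 P_1, \gamma\tau_2 P_2)$ for some $\gamma \in \Sym$, then the parabolic subgroups $P_1, P_2$ are unchanged and $(\gamma\tau_1)^{-1}(\gamma\tau_2) = \tau_1^{-1}\gamma^{-1}\gamma\tau_2 = \tau_1^{-1}\tau_2$, so the middle coordinate is unchanged as well; hence $\phi$ is well-defined. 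Moreover one should note that $(P_1, P_1\tau_1^{-1}\tau_2 P_2, P_2)$ is indeed an element of $\mcQ$ since $P_1, P_2$ are parabolic.

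Next I would construct the inverse explicitly. Given $(P_1, P_1\sigma P_2, P_2) \in \mcQ$, define $\psi(P_1, P_1\sigma P_2, P_2) = \Sym\cdot (\id\cdot P_1, \sigma P_2)$. To see this is well-defined, observe that the double coset $P_1\sigma P_2$ determines the pair $(P_1, \sigma P_2)$ up to the diagonal left $\Sym$-action only after we also remember $P_1$ and $P_2$: if $P_1\sigma P_2 = P_1 \sigma' P_2$ then $\sigma' = p_1 \sigma p_2$ for some $p_i \in P_i$, and then $(\id P_1, \sigma' P_2) = (p_1 P_1, p_1\sigma p_2 P_2) = p_1\cdot(P_1, \sigma P_2)$, so the two pairs lie in the same $\Sym$-orbit. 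A direct computation then shows $\phi\circ\psi = \id$ (since $P_1^{-1}\cdot\text{(rep. of }\sigma P_2) $ recovers $P_1\sigma P_2$) and $\psi\circ\phi = \id$ (since $\Sym\cdot(\tau_1 P_1, \tau_2 P_2) = \Sym\cdot(P_1, \tau_1^{-1}\tau_2 P_2)$ by translating by $\tau_1^{-1}$). Thus $\phi$ is a bijection.

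It remains to show $\phi$ and $\phi^{-1}$ preserve the order relations. For $\phi$: suppose $\Sym\cdot(\tau_1 P_1, \tau_2 P_2) \leq \Sym\cdot(\tau_1' P_1', \tau_2' P_2')$, witnessed by $\gamma$ with $\tau_1 P_1 \supseteq \gamma\tau_1' P_1'$ and $\tau_2 P_2 \supseteq \gamma\tau_2' P_2'$. Containment of cosets $\tau_1 P_1 \supseteq \gamma\tau_1' P_1'$ forces $P_1 \supseteq P_1'$ (a coset of a smaller parabolic sitting inside a coset of $P_1$ is a union of $P_1'$-cosets only if $P_1' \subseteq P_1$; more precisely, $\tau_1 P_1 \supseteq \gamma\tau_1' P_1'$ and $\gamma\tau_1'\in\tau_1 P_1$ give $\gamma\tau_1' P_1' \subseteq \tau_1 P_1$, so $P_1' \subseteq (\gamma\tau_1')^{-1}\tau_1 P_1 = P_1$ after noting $(\gamma\tau_1')^{-1}\tau_1 \in P_1$). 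Similarly $P_2' \subseteq P_2$. For the middle coordinate, write $\tau_1^{-1}\gamma\tau_1' \in P_1$ and $\tau_2^{-1}\gamma\tau_2' \in P_2$; then $\tau_1'^{-1}\tau_2' = (\tau_1^{-1}\gamma\tau_1')^{-1}(\tau_1^{-1}\tau_2)(\tau_2^{-1}\gamma\tau_2') \in P_1(\tau_1^{-1}\tau_2)P_2$, which gives $P_1'\sigma' P_2' \subseteq P_1\sigma P_2$ where $\sigma=\tau_1^{-1}\tau_2$, $\sigma'=\tau_1'^{-1}\tau_2'$ — using $P_1'\subseteq P_1$, $P_2'\subseteq P_2$. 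The converse direction (that $\phi^{-1}$ is order-preserving) runs the same computation backwards: from $P_1\supseteq P_1'$, $P_2\supseteq P_2'$, and $P_1\sigma P_2 \supseteq P_1'\sigma' P_2'$ one extracts $p_1, p_2$ with $\sigma' = p_1\sigma p_2$ and then takes $\gamma = \tau_1 p_1 \tau_1'^{-1}$ (for suitably chosen representatives) as the witness, checking both coset containments.

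The main obstacle I anticipate is the bookkeeping in the order-preservation step: one must be careful about the difference between containment of cosets and containment of the underlying parabolic subgroups, and about the fact (flagged in the remark before the lemma) that $P_1\sigma P_2$ alone does not determine $P_1$ and $P_2$ — which is exactly why the "marking" by $(P_1, P_2)$ is part of the data of $\mcQ$ and must be tracked separately throughout. Getting the witnessing element $\gamma$ right in the reverse direction, and confirming it simultaneously satisfies both coset containments rather than just one, is the delicate point.
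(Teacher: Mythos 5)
Your proposal is correct and takes essentially the same route as the paper: it constructs the very same inverse $\psi(P_1,P_1\sigma P_2,P_2)=\Sym\cdot(P_1,\sigma P_2)$ and then carries out the coset manipulations that the paper dismisses as ``easy to verify'' and ``elementary''. The only nitpick is the witness in the reverse direction: with $\sigma'=p_1\sigma p_2$ and representatives $\tau_1=\tau_1'=\id$ the element that works is $\gamma=p_1^{-1}$ rather than $p_1$ (equivalently, reparametrise the double coset as $\sigma'=p_1^{-1}\sigma p_2$), which your hedge ``for suitably chosen representatives'' already accommodates.
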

\begin{proof}
To see that $\phi$ is a bijection of the underlying sets, consider the following map:
\begin{align*}
    \psi: \mcQ &\to \mcP \\
    (P_1,P_1 \sigma P_2,P_2)& \mapsto \Sym\cdot (P_1, \sigma P_2).
\end{align*}
It is easy to verify that $\phi$ and $\psi$ are independent of the choices of representatives and are inverse to one another.
That $\phi$ is indeed a map of posets, i.e.~that it preserves the partial ordering, follows from elementary manipulations of cosets.
\end{proof}

\begin{theorem}\label{cor_stratification}
The set $\mathcal{B}_n$ of barcodes with $n$ bars is stratified over the poset $\mathcal Q$. The lowest dimensional stratum containing the barcode $B$ is the stratum corresponding to $(P_b^B, D_B, P_d^B)\in \mathcal{Q}$. It is of the form 
\begin{equation*}
    \mathcal{B}_n^{(P_b^B, D_B, P_d^B)} =
    \left(\Sym \cdot (\operatorname{cone}(\tau_b P_b^B) \times \mbR \times \operatorname{cone}(\tau_d P_d^B) \times \mbR)\right) \cap Y.
\end{equation*}
\end{theorem}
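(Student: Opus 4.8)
The plan is to assemble \cref{cor_stratification} from the pieces already in place: the stratification of $\mbR^{2n}$ coming from $\CoxComplex$, the fact that the projection maps are $\Sym$-equivariant, \cref{isom_posets}, and the bijection $\phi:\mathcal{B}_n\to Y$ of \cref{prop_bijection_B_n}. The overall strategy is: first establish that $X=\Sym\backslash\mbR^{2n}$ is stratified over $\mcP\cong\mcQ$ (this is essentially done in the text preceding the statement), then check that intersecting with the $\Sym$-invariant subset $Y$ restricts the stratification to a stratification of $Y$, and finally transport everything along $\phi$ to $\mathcal{B}_n$, identifying the lowest-dimensional stratum through $B$ explicitly.

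First I would recall the decomposition \cref{eq_dec_Rn} and note, via \cref{rem_cone} and the $\Sym$-equivariance of $(\restr{p}{\mbR^n\setminus L},q)$ from \cref{prop_CoxCoordsRn}, that the simplicial stratification of $\CoxComplex$ induces a $\Sym$-equivariant stratification of $\mbR^{2n}$ indexed by pairs $(\tau_1P_1,\tau_2P_2)$ ordered component-wise by reverse inclusion; passing to the quotient gives the stratification of $X$ over $\mcP$, and \cref{isom_posets} reindexes this over $\mcQ$. Here one should verify the four axioms of \cref{def_strat} for $X$: axiom (1) is clear, axioms (2) and (3) follow because taking $\Sym$-orbits of strata and of their intersections behaves well (the image of a stratum is a stratum of the same dimension, and orbits of unions are unions of orbits), and axiom (4) — the existence of a unique minimal stratum through each point — follows from the corresponding fact for $\CoxComplex$ (\cref{rem_uniquely_det}, which identifies the lowest face through $x$ as the coset $\tau_xP_x$) together with the fact that $\Sym$ preserves dimension. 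Then I would observe that $Y\subseteq X$ is a union of strata intersected with $Y$: for any stratum $X_i$ of $X$, set $Y_i\coloneqq X_i\cap Y$; since $Y$ is defined by the $\Sym$-invariant condition $x_i<y_i$, each nonempty $Y_i$ inherits axioms (1)–(4) from $X$, so $Y$ is stratified over the sub-poset of $\mcQ$ of indices $i$ with $Y_i\neq\emptyset$, and a fortiori over $\mcQ$ itself (allowing empty strata, or restricting to the realised sub-poset).

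Next I would transport along $\phi$: since $\phi:\mathcal{B}_n\to Y$ is a bijection (\cref{prop_bijection_B_n}), the strata $\mathcal{B}_n^i\coloneqq\phi^{-1}(Y_i)$ give a stratification of $\mathcal{B}_n$ over $\mcQ$. It remains to identify, for a given $B=\{(b_i,d_i)\}\in\mathcal{B}_n$, the minimal stratum through $\phi(B)=[b_1,\dots,b_n,d_1,\dots,d_n]$. By \cref{rem_uniquely_det} applied to $(b_1,\dots,b_n)$ and to $(d_1,\dots,d_n)$ separately, the lowest face of the first $\CoxComplex$-factor through this point is $\tau_bP_b^B$ and of the second is $\tau_dP_d^B$, with $P_b^B,P_d^B$ the parabolic subgroups of \cref{def_para_subgroups}; taking cones adds the extra $\mbR$-factors (the averages) and the cone radii (the standard deviations), which impose no further constraints, so the minimal stratum of $\mbR^{2n}$ through $(b,d)$ is $\operatorname{cone}(\tau_bP_b^B)\times\mbR\times\operatorname{cone}(\tau_dP_d^B)\times\mbR$. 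Passing to the $\Sym$-quotient and intersecting with $Y$ gives exactly the claimed formula, and under the poset isomorphism $\phi$ of \cref{isom_posets} the orbit $\Sym\cdot(\tau_bP_b^B,\tau_dP_d^B)$ corresponds to $(P_b^B,P_b^B\tau_b^{-1}\tau_dP_d^B,P_d^B)=(P_b^B,D_B,P_d^B)$, using \cref{rem_double_coset_well_def} for the identification of the middle entry with $D_B$.

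The main obstacle I anticipate is bookkeeping rather than conceptual: carefully checking that the operations "take $\Sym$-orbit" and "intersect with $Y$" each preserve all four stratification axioms of \cref{def_strat}, in particular axiom (3) (that pairwise intersections of strata are unions of strata) and axiom (4) (well-definedness of the minimal stratum). For axiom (4) one must confirm that among the strata of $X$ containing a point $[x]$, the intersection is again a single stratum — this uses that the collection of faces of $\CoxComplex\times\CoxComplex$ through a lift $x$ has a unique minimum and that this minimum is $\Sym$-equivariantly well-defined, i.e.~independent of the chosen lift $x$ in the orbit. A secondary subtlety is whether to phrase the target poset as all of $\mcQ$ or only the sub-poset of strata that actually meet $Y$; I would state it so that the minimal stratum through each $B$ is exactly the realised one $(P_b^B,D_B,P_d^B)$, which is automatically nonempty since it contains $B$, and remark that strata indexed by elements of $\mcQ$ not of this form are empty, so no generality is lost.
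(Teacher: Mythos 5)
Your proposal is correct and follows essentially the same route as the paper's proof: stratify $X=\Sym\backslash\mbR^{2n}$ over $\mcP\cong\mcQ$ via the Coxeter-complex decomposition and \cref{isom_posets}, restrict to the $\Sym$-invariant subset $Y$, transport along the bijection $\phi$ of \cref{prop_bijection_B_n}, and identify the minimal stratum through $B$ via \cref{rem_uniquely_det} applied to births and deaths. Your added care in verifying the axioms of \cref{def_strat} and in handling strata that do not meet $Y$ is detail the paper leaves implicit, not a different argument.
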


\begin{proof}
Recall that $\mathcal{B}_n \cong Y$ is a subset of $X = \Sym\backslash \mbR^{2n}$  (\cref{prop_bijection_B_n}).
As observed above, $X$ is stratified over the poset $\mcP$ and, by \cref{isom_posets}, this poset is isomorphic to $\mcQ.$ It follows that $\mathcal{B}_n$ is also stratified over $\mcQ$. The strata are obtained by taking the intersection with $Y$.

This stratification is induced by the simplicial structure of the Coxeter complexes in 
\begin{equation*}
    X \cong \Sym \backslash \big( \operatorname{cone}(\CoxComplex) \times \mbR \times \operatorname{cone}(\CoxComplex) \times \mbR \big).
\end{equation*}
Hence, the strata that contain a barcode $B\in \mathcal{B}_n$ only depend on the coordinate $\Sym \cdot (b_\theta, d_\theta) \in \Sym  \backslash \CoxComplex \times \CoxComplex$ that $B$ determines by \cref{thm_CoxCoords}. As explained in \cref{rem_uniquely_det}, the associated points $b_\theta,\, d_\theta \in \CoxComplex$ lie in the interior of the simplices $\tau_b P_b^B, \, \tau_d P_d^B$. Hence, the lowest dimensional stratum that contains $B$ corresponds to the $\Sym$-orbit of $(\tau_b P_b^B, \tau_d P_d^B)$. 
\end{proof}

Let $B$ be a strict barcode, that is, $b_i \neq b_j$ and $d_i \neq d_j$ for $i \neq j$. Then $B$ is contained in the top-dimensional stratum 
\begin{equation*}
    \mathcal{B}_n^{(\ls \id \rs, \ls \id \rs  \tau_b^{-1}\tau_d \ls \id \rs, \ls \id \rs)} =
    \left(\Sym \cdot (\operatorname{cone}(\tau_b\ls \id \rs) \times \mbR \times \operatorname{cone}(\tau_d\ls \id \rs) \times \mbR)\right)\cap Y.
\end{equation*}
Changing the representative of the $\Sym$-orbit, this can be rewritten as
\begin{equation*}
    \mathcal{B}_n^{(\ls \id \rs, \ls \sigma_B \rs, \ls \id \rs)} =
    \left(\Sym \cdot (\operatorname{cone}(\ls \id \rs) \times \mbR \times \operatorname{cone}(\sigma_B\ls \id \rs) \times \mbR)\right)\cap Y,
\end{equation*}
where $\sigma_B = \tau_b^{-1}\tau_d$ is the permutation associated to $B$ as in \cref{def_permutation}. In particular, the strata containing strict barcodes are in one-to-one correspondence with the elements of $\Sym$.

When one considers the cone and real line parameters in the stratification of \cref{cor_stratification}, one obtains regions that are determined by the averages and standard deviations of \cref{thm_CoxCoords} and by parabolic subgroups. 

\begin{corollary}\label{cor_simplified_coordinates}
The Coxeter coordinates of \cref{thm_CoxCoords} decompose the space $\mathcal{B}_n$ of barcodes with $n$ bars into disjoint regions. The region containing the barcode $B = \ls (b_i,d_i) \rs_{i \in \{1,..., n\}} \in \mathcal{B}_n$ is defined as the set of all barcodes $B'$ such that:
\begin{enumerate}
    \item its average birth time is the same as that of $B$, i.e.~$\bar{b}'= \bar{b}$;
    \item its average death time is the same as that of $B$, i.e.~$\bar{d}'= \bar{d}$;
    \item its birth standard deviation is the same as that of $B$, i.e.~$\lVert v_{b'} \rVert =\lVert v_{b} \rVert$;
    \item its death standard deviation is the same as that of $B$, i.e.~$\lVert v_{d'} \rVert =\lVert v_{d} \rVert$;
    \item $P_b^{B'} = P_b^B $, $P_d^{B'} = P_d^B $ and $D_B = D_{B'}$.
\end{enumerate}
For strict barcodes, the information of the last Item 5 is equivalent to specifying $\sigma_B$, the permutation associated to barcodes in \cref{def_permutation}.
\end{corollary}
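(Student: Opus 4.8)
The plan is to package the five conditions in the statement into a single invariant and to recognise the regions as its fibres, leaning on \cref{thm_CoxCoords}, \cref{cor_stratification} and \cref{isom_posets}. Concretely, I would consider the map $\Phi\colon \mathcal{B}_n \to \mbR \times \mbR \times \mbR_{\geq 0}\times \mbR_{\geq 0}\times \mcQ$ sending a barcode $B$ to $(\bar b,\ \bar d,\ \lVert v_b\rVert,\ \lVert v_d\rVert,\ (P_b^B, D_B, P_d^B))$. This map is well defined: its first four entries are the real coordinates of \cref{thm_CoxCoords}, and the triple $(P_b^B, D_B, P_d^B)$ is an invariant of $B$, independent of the chosen indexing, by \cref{rem_double_coset_well_def} (in the degenerate cases where all births or all deaths coincide, the conventions of \cref{rem_uniquely_det} with $\tau P = \Sym$ still apply). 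Being a map, $\Phi$ has fibres that are pairwise disjoint and cover $\mathcal{B}_n$; these are the \emph{regions}, and by construction two barcodes $B, B'$ lie in the same region exactly when Items 1--5 hold. So the only real content is to check that this decomposition is the one induced by the Coxeter coordinates, i.e.~that $\Phi$ records those coordinates while forgetting the exact point of $\CoxComplex\times\CoxComplex$ and retaining only its stratum.

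For that, I would recall from \cref{thm_CoxCoords} that a barcode with at least two distinct births and two distinct deaths is determined by $\bar b, \bar d, \lVert v_b\rVert, \lVert v_d\rVert$ together with the orbit $\Sym\cdot(b_\theta, d_\theta)\in \Sym\backslash\CoxComplex\times\CoxComplex$. By \cref{cor_stratification}, the lowest-dimensional stratum of $\mathcal{B}_n$ containing $B$ depends only on this orbit --- in fact only on the $\Sym$-orbit of the pair of simplices $(\tau_b P_b^B, \tau_d P_d^B)$ whose interiors contain $b_\theta$ and $d_\theta$ (\cref{rem_uniquely_det}) --- and this stratum is indexed, via the poset isomorphism of \cref{isom_posets}, precisely by the triple $(P_b^B,\,P_b^B\tau_b^{-1}\tau_d P_d^B,\,P_d^B) = (P_b^B, D_B, P_d^B)\in \mcQ$. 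Hence the last coordinate of $\Phi(B)$ is exactly the label of the lowest stratum containing $B$, so $\Phi$ indeed records the four real Coxeter coordinates together with this stratum label, and its fibres are the sets of barcodes with prescribed real coordinates and prescribed lowest stratum. This is exactly the region described in the corollary.

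Finally, for the strict case I would invoke \cref{rem_double_coset_well_def}: if $B$ is strict, then $P_b^B = P_d^B = \ls\id\rs$, so $(P_b^B, D_B, P_d^B) = (\ls\id\rs, \ls \sigma_B\rs, \ls\id\rs)$, and Item 5 holds for $B'$ precisely when $B'$ is strict with $\sigma_{B'} = \sigma_B$. This recovers the permutation invariant of \cref{def_permutation} and completes the argument.

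The corollary carries essentially no analytic content; I expect the only real work --- and the main potential pitfall --- to be the bookkeeping when passing between the three equivalent presentations of a stratum (an orbit $\Sym\cdot(\tau_b P_b^B, \tau_d P_d^B)$ of pairs of cosets, the corresponding element of $\mcP$, and the marked double coset $(P_b^B, D_B, P_d^B)\in \mcQ$) through \cref{isom_posets}, together with making sure that the degenerate barcodes in which all births or all deaths agree are correctly subsumed under the $\tau P = \Sym$ convention of \cref{rem_uniquely_det}.
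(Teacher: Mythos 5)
Your argument is correct and follows essentially the same route as the paper, which states this corollary without a separate proof as a direct consequence of \cref{thm_CoxCoords} and \cref{cor_stratification}: the four real Coxeter coordinates together with the stratum label $(P_b^B, D_B, P_d^B)\in\mcQ$ (via \cref{isom_posets}) cut out exactly the regions described, and the strict case reduces to $P_b^B=P_d^B=\ls\id\rs$, $D_B=\ls\sigma_B\rs$ as in \cref{rem_double_coset_well_def}. Your explicit packaging into the fibres of the map $\Phi$ and your attention to the degenerate case where all births or all deaths coincide are fine and, if anything, slightly more careful than the paper's implicit treatment.
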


\section{A metric on $\mathcal{B}_n$}
\label{sec_metric_B_n}
In this section, we explain how the description of $\mathcal{B}_n$ given in \cref{sec_alt_description} with $\mbR^n$ equipped with the $l^\infty$-norm gives rise to a naturally defined metric $\tilde{d}_B$ on $\mathcal{B}_n$ that is closely related to the bottleneck distance. Similarly, the $l^2$-norm on $\mbR^n$ leads to a modified Wasserstein distance $\tilde{d}_W$ on $\mathcal{B}_n$.

To describe $\tilde{d}_B$, we equip $\mbR^{2n}$ with the metric $d_\infty$ induced by the $l^\infty$-norm.
This metric induces a map $\targetspace \times \targetspace \to \mbR$ on the quotient by taking the minimum value over all representatives of the corresponding equivalence classes:
\begin{align}
\label{eq_distance_quotient}
\begin{split}
    d: \targetspace \times \targetspace &\to \mbR \\
    \big([x,y],[x',y']\big)&\mapsto \min_{\substack{(\tilde{x},\tilde{y}) \in [x,y], \\  (\tilde{x}', \tilde{y}')\in [x',y']}}  d_\infty(\,(\tilde{x},\tilde{y}),(\tilde{x}',\tilde{y}')\,).
\end{split}
\end{align}

We will show that this map restricted to $Y$ agrees with a modified version of the bottleneck distance.

\begin{definition}\label{modified_bottleneck}
Let $B= \{(b_i,d_i)\}_{i \in \{1,...,n\}}$ and $B'= \{(b'_i,d'_i)\}_{i \in \{1,...,n\}}$ be two barcodes in $\mathcal{B}_n$. The \emph{modified bottleneck distance} between $B$ and $B'$ is 
\begin{equation*}
    \tilde{d}_B(B,B') \coloneqq \min_{\gamma  \in \Sym} \max_{i \in \{1,...,n\}} \lVert (b_i,d_i) -(b'_{\gamma(i)},d'_{\gamma(i)}) \rVert_\infty.
\end{equation*}
where $\lVert \cdot \rVert_\infty$ is the $l^\infty$-norm on $\mbR^2$.
\end{definition}
Note that the difference between the modified bottleneck distance and the original bottleneck distance as defined in \cref{bottleneck} is that for the modified version, one does not allow to match points of the barcodes to the diagonal $\Delta$ (see \cref{fig_bottleneck}).
Furthermore, $\tilde{d}_B(B,B')$ is well-defined only if both $B$ and $B'$ contain the same number of bars, i.e.~if they are both elements of the same $\mathcal{B}_n$. This is not necessary for the definition of the regular bottleneck distance, cf.~\cref{rem_bottleneck-more-bars}.
\begin{proposition}
\label{prop_isometry_B_n}
The map $d$ defines a metric on $Y$ with respect to which $\phi:(\mathcal{B}_n, \tilde{d}_B) \longrightarrow (\subtargetspace, d)$ is an isometry.
\end{proposition}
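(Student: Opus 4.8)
The plan is to unwind both sides of the claimed isometry and check they compute the same number. First I would verify that $d$ restricted to $\subtargetspace$ is genuinely a metric: symmetry is immediate from the definition, and $d([x,y],[x,y])=0$ is clear. The one nontrivial point is that $d([x,y],[x',y'])=0$ forces $[x,y]=[x',y']$; this holds because $\Sym$ is finite, so the minimum in \cref{eq_distance_quotient} is attained by some representatives $(\tilde x,\tilde y),(\tilde x',\tilde y')$, and $d_\infty$ being a metric on $\mbR^{2n}$ then gives $(\tilde x,\tilde y)=(\tilde x',\tilde y')$, hence equal orbits. For the triangle inequality I would pick representatives realising $d([x,y],[x'',y''])$ and $d([x'',y''],[x',y'])$ that agree on the middle orbit — again possible because the orbit is finite and we may translate both chosen pairs by a common group element to make their middle components literally equal — and then invoke the triangle inequality for $d_\infty$ on the chosen representatives, noting the left-hand side is a minimum over all representatives and hence no larger.

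Next I would show $\phi$ is an isometry, i.e.\ $\tilde d_B(B,B') = d(\phi(B),\phi(B'))$ for all $B,B'\in\mathcal{B}_n$. Fix indexings $B=\{(b_i,d_i)\}$, $B'=\{(b'_i,d'_i)\}$, so $\phi(B)=[b_1,\dots,b_n,d_1,\dots,d_n]$ and likewise for $B'$. The key observation is that a representative of the orbit $[b,d]$ is exactly a tuple $(b_{\gamma^{-1}(1)},\dots,b_{\gamma^{-1}(n)},d_{\gamma^{-1}(1)},\dots,d_{\gamma^{-1}(n)})$ for some $\gamma\in\Sym$, since $\Sym$ acts diagonally on the two $\mbR^n$ blocks simultaneously (this is the reason the diagonal action matters). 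Because we are free to choose the representative on either side, we may without loss of generality fix the representative of $\phi(B')$ to be the original tuple and let only the representative of $\phi(B)$ vary over the orbit; so
\begin{equation*}
    d(\phi(B),\phi(B')) = \min_{\gamma\in\Sym} d_\infty\big((b_{\gamma^{-1}(1)},\dots,d_{\gamma^{-1}(n)}),(b'_1,\dots,d'_n)\big).
\end{equation*}
Now $d_\infty$ on $\mbR^{2n}$ is the max of the $2n$ coordinate differences, which regroups as $\max_{i}\max(|b_{\gamma^{-1}(i)}-b'_i|,|d_{\gamma^{-1}(i)}-d'_i|) = \max_i \lVert (b_{\gamma^{-1}(i)},d_{\gamma^{-1}(i)})-(b'_i,d'_i)\rVert_\infty$. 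Reindexing the max by $j=\gamma^{-1}(i)$ and replacing $\gamma$ by $\gamma^{-1}$ (a bijection of $\Sym$) turns this into $\min_{\gamma\in\Sym}\max_{j} \lVert (b_j,d_j)-(b'_{\gamma(j)},d'_{\gamma(j)})\rVert_\infty$, which is precisely $\tilde d_B(B,B')$ from \cref{modified_bottleneck}. Finally, since $\phi$ is a bijection $\mathcal{B}_n\to\subtargetspace$ (\cref{prop_bijection_B_n}) and we have shown it carries $\tilde d_B$ to $d$, the fact that $\tilde d_B$ is a metric transfers to $d$ on $\subtargetspace$, completing the ``$d$ is a metric on $Y$'' part as well.

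I expect the main obstacle to be purely bookkeeping: keeping straight whether a given permutation acts by $\gamma$ or $\gamma^{-1}$, and making sure the orbit-representative argument is clean — in particular justifying that it suffices to vary the representative on only one side (this follows because for group actions $d_\infty$-distance between orbits equals $\min_\gamma d_\infty(\gamma\cdot p, p')$, as $\Sym$ acts by $d_\infty$-isometries, so translating both chosen representatives by the same element reduces to a one-sided minimisation). There is no deep content here; the substance is entirely in correctly matching the diagonal $\Sym$-action on $\mbR^n\times\mbR^n$ with the ``global reindexing'' $\gamma$ appearing in the modified bottleneck distance, and in the elementary observation that the $l^\infty$-norm on $\mbR^{2n}=\mbR^n\times\mbR^n$ decomposes as the max over $n$ copies of the $l^\infty$-norm on $\mbR^2$ once the coordinates are paired up as $(b_i,d_i)$.
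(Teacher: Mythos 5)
Your proposal is correct and follows essentially the same route as the paper: unwind the definition of $d$ via a one-sided minimisation over $\Sym$ (valid since the action is by $d_\infty$-isometries), regroup the $l^\infty$-norm on $\mbR^{2n}$ as a maximum of $l^\infty$-norms on $\mbR^2$ over the paired coordinates, and reindex to recover $\tilde{d}_B$. Your explicit verification of the metric axioms for $d$ (attainment of the minimum by finiteness of $\Sym$, and the common-translate trick for the triangle inequality) is a welcome addition that the paper's proof leaves implicit.
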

\begin{proof}
As observed before in \cref{prop_bijection_B_n}, $\phi$ maps $\mathcal{B}_n$ bijectively onto $\subtargetspace$. Hence, it is sufficient to show that for arbitrary barcodes $B$ and $B'$,
\begin{equation*}\label{eq_distances}
    \tilde{d}_B(B,B') = d(\phi(B),\phi(B')).
\end{equation*}

This follows from simply spelling out the definitions.
For points $(x,y)$ and $(x',y')$ in $\mbR^n\times \mbR^n$,
\begin{align*}
       d_\infty((x,y), (x',y')) &= \max\ls |x_1-x'_1|, \ldots, |x_n-x'_n|,  |y_1-y'_1|, \ldots, |y_n-y'_n| \rs \\
     &= \max_{i=1,\ldots, n} \max\ls |x_i-x'_i|, |y_i-y'_i| \rs \\
     &= \max_{i=1,\ldots, n} \lVert (x_i,y_i) -(x'_i,y'_i) \rVert_\infty,  
\end{align*}
where $\lVert \cdot \rVert_\infty$ is the $l^\infty$-norm on $\mbR^2$.
Combining this with the definition of $d$ on $X$ (see \cref{eq_distance_quotient}), we obtain
\begin{align*}
    d(\phi(B),\phi(B')) &= \min_{\gamma\in\Sym} d_\infty(\,\phi(B),\gamma \cdot \phi(B')\,)\\
        &= \min_{\gamma\in\Sym} \max_{i=1,\ldots, n} \lVert (b_i,d_i) -(b'_{\gamma^{-1}(i)},y'_{\gamma^{-1}(i)}) \rVert_\infty.
\end{align*}
This is the same as the modified bottleneck distance of \cref{modified_bottleneck}.
\end{proof}

Similarly, starting with $\mbR^{2n}$ equipped with the $l^2$-norm, one can establish an isometry between $Y$ and $\mathcal{B}_n$ equipped with a modified Wasserstein distance instead.

\begin{figure}[H]
    \centering
    \includegraphics[scale = 0.4]{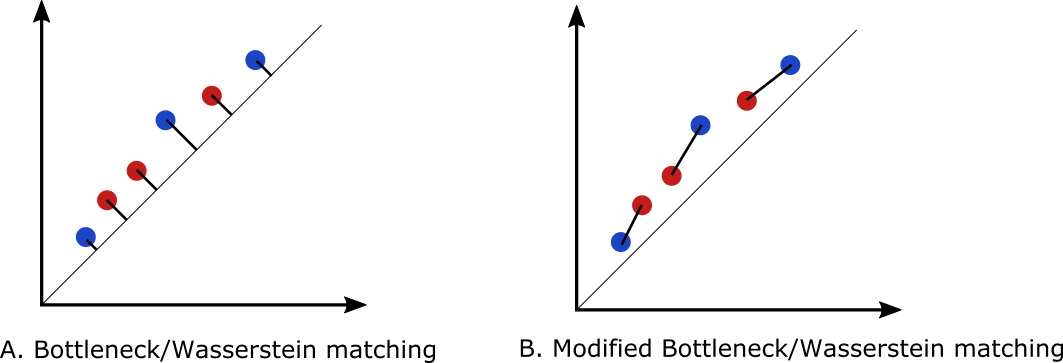}
    \caption{Two barcodes (red and blue) represented as persistence diagrams in $\mbR^2$. A. The matching that minimises the bottleneck or Wasserstein distance matches all the bars to the diagonal, as they are all very close to it. B. If bars are not allowed to be matched with the diagonal, the matching that minimises $\lVert (b_i,d_i) -(b'_{\gamma(i)},y'_{\gamma(i)}) \rVert_\infty$ for the bottleneck distance or $\sum_i \lVert (b_i,d_i) -(b'_{\gamma(i)},y'_{\gamma(i)}) \rVert_2$ respectively for the Wasserstein distance is different. }
    \label{fig_bottleneck}
\end{figure}

\begin{remark}\label{rem_bottleneck-more-bars}
Forgetting about the diagonal as done above opens the door to defining new metrics on barcodes by considering distances on $\mbR^n \times \mbR^n$ and then taking the quotient as was done in this section. It could potentially be extended to barcodes with different number of bars.
One could for instance imagine a map that forces matchings between as many bars as possible and then adds a positive weight equal to their distance to the diagonal to the unmatched bars if there are any. This is different from the bottleneck distance (or Wasserstein distance), which allows as many matchings as needed with the diagonal, see \cref{fig_bottleneck}.
When using barcodes to study data, bars close to the diagonal are usually considered as related to noise. However, there are cases where all the bars matter, for instance when the barcode is the one of a merge tree \cite{TRN,trees_barcodesII}. In such a case, a new metric that does not take the diagonal into account could turn out useful. We leave this for future work.
\end{remark}

\section{Future directions}
In this paper, we showed that the space $\mathcal B_n$ of barcodes with $n$ bars is stratified over the poset of marked double cosets of parabolic subgroups of $\Sym$. A question that arises is how this could be extended to the whole space of barcodes, i.e.~to the union $\bigcup_{n\in \mathbb{N}} \mathcal B_n$. An approach here would be to use appropriate inclusions $\mathcal{B}_m \hookrightarrow \mathcal{B}_{n}$ for $m\leq n$. Note that on the group level, there are natural injections $\Sym[m]\hookrightarrow \Sym[n]$. On the level of simplicial complexes, $\CoxComplex$ also contains copies of $\CoxComplex[{\Sym[m]}]$ for $m\leq n$.

It was shown in \cite{TRN,trees_barcodesII} that the permutation $\sigma_B$ associated to a strict barcode $B$ gives nice combinatorial insight on the number of merge trees that have the same barcode. This number, called the tree-realisation number (TRN), is derived directly from the permutation. It can also be used to do statistics on barcodes. Our coordinates (\cref{cor_simplified_coordinates}) firstly extend this work to any (possibly non-strict) barcode and secondly return a finer invariant than just the permutation. A future direction would be to study this finer invariant defined by $(\bar{b}, \bar{d}, \Vert v_b\Vert, \Vert v_d \Vert, \sigma_B)$. It might be well-suited for studying statistical questions: The first four elements already have descriptions as averages and standard deviations. The behaviour of the permutation $\sigma_B$ could be studied using tools from permutation statistics, such as the number of inversions or descents.

In a different direction, the description of $\mathcal{B}_n$ in terms of Coxeter complexes allows to rephrase these combinatorial questions in more geometric terms. Using this geometric perspective might give new ways for studying invariants and statistics on barcodes.

It would be interesting to see if the geometric and combinatorial tools developed here can help to understand inverse problems in TDA as the ones in \cite{TRN, trees_barcodesII,curry2017fiber,leygonie2021fiber}.
Since the merge tree to barcode problem is related to the symmetric group \cite{TRN,trees_barcodesII}, it is also natural to ask whether the stratification that we obtain in \cref{cor_stratification} can be extended to the space of merge trees with $n$ leaves. 

Lastly, the modified bottleneck and Wasserstein distances seem to have a different behaviour than the usual ones. A deeper study of their properties and their potential extension to the space of barcodes (see \cref{rem_bottleneck-more-bars}) is a natural next step to consider. 

\section*{Acknowledgements}
The authors would like to thank Kathryn Hess and Darrick Lee for the fruitful discussions and their useful comments on the manuscript.

\bibliographystyle{hplain}
\bibliography{bibliography}

\begin{thebibliography}{10}

\bibitem{AB:Buildings}
Peter Abramenko and Kenneth~S. Brown.
\newblock {\em Buildings}, volume 248 of {\em Graduate Texts in Mathematics}.
\newblock Springer, New York, 2008.

\bibitem{pers_images}
Henry Adams, Sofya Chepushtanova, Tegan Emerson, Eric Hanson, Michael Kirby,
  Francis Motta, Rachel Neville, Chris Peterson, Patrick Shipman, and Lori
  Ziegelmeier.
\newblock Persistence images: A stable vector representation of persistent
  homology, 2016, http://arxiv.org/abs/1507.06217v3.

\bibitem{alg_ring_barcodes}
Aaron Adcock, Erik Carlsson, and Gunnar Carlsson.
\newblock The ring of algebraic functions on persistence bar codes.
\newblock {\em Homology, Homotopy and Applications}, 18, 04 2013.

\bibitem{BHV}
Louis~J. Billera, Susan~P. Holmes, and Karen Vogtmann.
\newblock Geometry of the space of phylogenetic trees.
\newblock {\em Advances in Applied Mathematics}, 27(4):733--767, 2001.

\bibitem{Bjo:Somecombinatorialalgebraic}
Anders Bj{\"o}rner.
\newblock Some combinatorial and algebraic properties of {C}oxeter complexes
  and {T}its buildings.
\newblock {\em Advances in Mathematics}, 52(3):173--212, 1984.

\bibitem{BB:CombinatoricsCoxetergroups}
Anders Bj\"{o}rner and Francesco Brenti.
\newblock {\em Combinatorics of {C}oxeter groups}, volume 231 of {\em Graduate
  Texts in Mathematics}.
\newblock Springer, New York, 2005.

\bibitem{haefliger}
Martin~R. Bridson and Andr\'{e} Haefliger.
\newblock {\em Metric spaces of non-positive curvature}, volume 319 of {\em
  Grundlehren der Mathematischen Wissenschaften [Fundamental Principles of
  Mathematical Sciences]}.
\newblock Springer-Verlag, Berlin, 1999.

\bibitem{landscapes}
Peter Bubenik.
\newblock Statistical topological data analysis using persistence landscapes.
\newblock {\em Journal of Machine Learning Research}, 16:77--102, 01 2015.

\bibitem{Byrne2019}
Helen~M Byrne, Heather~A Harrington, Ruth Muschel, Gesine Reinert, Bernadette~J
  Stolz, and Ulrike Tillmann.
\newblock Topological methods for characterising spatial networks: A case study
  in tumour vasculature, 2019, https://arxiv.org/abs/1907.08711.

\bibitem{topdata}
Gunnar Carlsson.
\newblock Topology and data.
\newblock {\em Bull. Amer. Math. Soc. (N.S.)}, 46(2):255--308, 2009.

\bibitem{stable_signatures}
Mathieu Carri{\`e}re, Steve Oudot, and Maks Ovsjanikov.
\newblock Stable topological signatures for points on 3d shapes.
\newblock {\em Computer Graphics Forum}, 34, 2015.

\bibitem{crawley2015decomposition}
William Crawley-Boevey.
\newblock Decomposition of pointwise finite-dimensional persistence modules.
\newblock {\em Journal of Algebra and its Applications}, 14(05):1550066, 2015.

\bibitem{curry2017fiber}
Justin Curry.
\newblock The fiber of the persistence map for functions on the interval.
\newblock {\em Journal of Applied and Computational Topology}, 2(3):301--321,
  2018.

\bibitem{trees_barcodesII}
Justin Curry, Jordan DeSha, Adélie Garin, Kathryn Hess, Lida Kanari, and
  Brendan Mallery.
\newblock From trees to barcodes and back again: theoretical and statistical
  perspectives, 2021, https://arxiv.org/abs/2010.11620.

\bibitem{Dav:geometrytopologyCoxeter}
Michael~W. Davis.
\newblock The geometry and topology of {C}oxeter groups.
\newblock In {\em Introduction to modern mathematics}, volume~33 of {\em Adv.
  Lect. Math. (ALM)}, pages 129--142. Int. Press, Somerville, MA, 2015.

\bibitem{Robins_materials}
O.~{Delgado-Friedrichs}, V.~Robins, and A.~Sheppard.
\newblock Morse theory and persistent homology for topological analysis of
  {{3D}} images of complex materials.
\newblock In {\em 2014 {{IEEE International Conference}} on {{Image
  Processing}} ({{ICIP}})}, pages 4872--4876, 10 2014.

\bibitem{diagram_to_vectors}
Barbara Di~Fabio and Massimo Ferri.
\newblock Comparing persistence diagrams through complex vectors.
\newblock In Vittorio Murino and Enrico Puppo, editors, {\em Image Analysis and
  Processing --- ICIAP 2015}, pages 294--305, Cham, 2015. Springer
  International Publishing.

\bibitem{perssurvey}
Herbert Edelsbrunner and John Harer.
\newblock Persistent homology---a survey.
\newblock In {\em Surveys on discrete and computational geometry}, volume 453
  of {\em Contemp. Math.}, pages 257--282. Amer. Math. Soc., Providence, RI,
  2008.

\bibitem{Gameiro2015}
M.~Gameiro, Y.~Hiraoka, S.~Izumi, M.~Kram{\'a}r, K.~Mischaikow, and Vidit
  Nanda.
\newblock A topological measurement of protein compressibility.
\newblock {\em Japan Journal of Industrial and Applied Mathematics}, 32:1--17,
  2015.

\bibitem{barcodes}
Robert Ghrist.
\newblock Barcodes: the persistent topology of data.
\newblock {\em Bull. Amer. Math. Soc. (N.S.)}, 45(1):61--75, 2008.

\bibitem{grindstaff}
Gillian Grindstaff and Megan Owen.
\newblock Geometric comparison of phylogenetic trees with different leaf sets,
  2018, https://arxiv.org/abs/1807.04235.

\bibitem{astronomy}
Sven {Heydenreich}, Benjamin {Br{\"u}ck}, and Joachim {Harnois-D{\'e}raps}.
\newblock {Persistent homology in cosmic shear: Constraining parameters with
  topological data analysis}.
\newblock {\em Astronomy \& Astrophysics}, 648:A74, 2021.

\bibitem{Hultman2007}
Axel Hultman.
\newblock The combinatorics of twisted involutions in {C}oxeter groups.
\newblock {\em Transactions of the American Mathematical Society},
  359(6):2787--2798, 2007.

\bibitem{space_barcodes}
Emile Jacquard, Vidit Nanda, and Ulrike Tillmann.
\newblock The space of barcode bases for persistence modules, 2021,
  https://arxiv.org/abs/2111.03700.

\bibitem{trop_coord}
Sara Kali\v{s}nik.
\newblock Tropical coordinates on the space of persistence barcodes.
\newblock {\em Found. Comput. Math.}, 19(1):101–129, feb 2019.

\bibitem{TMD}
L.~Kanari, P.~D{\l}otko, M.~Scolamiero, R.~Levi, J.~Shillcock, K.~Hess, and
  H.~Markram.
\newblock A topological representation of branching neuronal morphologies.
\newblock {\em Neuroinformatics}, 16(1):3--13, 01 2018.

\bibitem{TRN}
Lida Kanari, Adélie Garin, and Kathryn Hess.
\newblock From trees to barcodes and back again: theoretical and statistical
  perspectives.
\newblock {\em Algorithms}, 13, 2020.

\bibitem{Lee2018}
Y.~Lee, S.~Barthel, P.~Dłotko, S.~M. Moosavi, K.~Hess, and B.~Smit.
\newblock High-throughput screening approach for nanoporous materials genome
  using topological data analysis: Application to zeolites.
\newblock {\em Journal of Chemical Theory and Computation}, 14:4427 -- 4437,
  2018.

\bibitem{leygonie2021fiber}
Jacob Leygonie and Ulrike Tillmann.
\newblock The fiber of persistent homology for simplicial complexes, 2021,
  https://arxiv.org/abs/2104.01372.

\bibitem{Muszynski2019}
G.~Muszynski, K.~Kashinath, V.~Kurlin, Michael~F. Wehner, and M.~Prabhat.
\newblock Topological data analysis and machine learning for recognizing
  atmospheric river patterns in large climate datasets.
\newblock {\em Geoscientific Model Development}, 12:613--628, 2019.

\bibitem{Pet:twosidedanalogue}
T.~Kyle Petersen.
\newblock A two-sided analogue of the {C}oxeter complex.
\newblock {\em Electronic Journal of Combinatorics}, 25(4):Paper 4.64, 28,
  2018.

\bibitem{permutohedron}
Alexander Postnikov.
\newblock Permutohedra, associahedra, and beyond.
\newblock {\em Int. Math. Res. Not. IMRN}, (6):1026--1106, 2009.

\bibitem{Reimann2017}
M.~W. Reimann, M.~Nolte, M.~Scolamiero, K.~Turner, R.~Perin, G.~Chindemi,
  P.~Dlotko, R.~Levi, K.~Hess, and H.~Markram.
\newblock Cliques of neurons bound into cavities provide a missing link between
  structure and function.
\newblock {\em Frontiers in Computational Neuroscience}, 11, 2017.

\bibitem{robins_percolating_2016}
Vanessa Robins, Mohammad Saadatfar, Olaf {Delgado-Friedrichs}, and Adrian~P.
  Sheppard.
\newblock Percolating length scales from topological persistence analysis of
  micro-{{CT}} images of porous materials.
\newblock {\em Water Resources Research}, 52(1):315--329, 01 2016.

\bibitem{master_thesis}
Chenguang Xu.
\newblock A correspondence between schubert cells and persistence diagrams.
\newblock {\em Master thesis, Kyoto university, Supervisor: Yasuaki Hiraoka},
  2020.

\end{thebibliography}
\end{document}